\pgfplotsset{compat = 1.11}
\tikzset{>={Latex[width=1.7mm,length=2.2mm]}}
\newtheorem{defn}{\noindent \bf{Definition}}[section]
\newtheorem{thm} {\noindent \bf{Theorem}}[section]
\newtheorem{remark}{\noindent \bf{Remark}}[section]
\newtheorem{prop}{\noindent \bf{Proposition}}[section]
\newtheorem{cor}{\noindent \bf{Corollary}}[section]
\newtheorem{condition}{\noindent \bf{Conditions}}[section]
\newtheorem{asmp}{\noindent \bf{Assumptions}}[section]
\begin{document}

\title{{\bf Submission for the MAM9 special issue:} \\
Matrix geometric approach for random walks: stability condition and equilibrium distribution}
\author{
        Stella Kapodistria\footnotemark[1] \and
Zbigniew Palmowski  \footnotemark[2] }

\date{\today}

\maketitle

\footnotetext[1]{Department of Mathematics and Computer Science, Eindhoven University of Technology,
P.O.\ Box 513, 5600 MB  Eindhoven, The Netherlands,
E-mail:   \texttt{s.kapodistria@tue.nl}}
\footnotetext[2]{Faculty of Pure and Applied Mathematics, Wroclaw University of Science and Technology, Wyb. Wyspia\'nskiego 27, 50-370 Wroclaw, Poland,
E-mail:   \texttt{zbigniew.palmowski@gmail.com}}

\begin{abstract}
In this paper, we analyse a sub-class of two-dimensional homogeneous nearest neighbour (simple) random walk restricted on the lattice using the matrix geometric approach. In particular, we first present an alternative approach for the calculation of the stability condition, extending the result of Neuts drift conditions \cite{Neuts} and connecting it with the result of Fayolle et al. which is based on Lyapunov functions \cite{FIM}. Furthermore, we consider the sub-class of random walks with equilibrium distributions given as series of product-forms and, for this class of random walks, we calculate the eigenvalues and the corresponding eigenvectors of the infinite matrix $\bm{R}$ appearing in the matrix geometric approach. This result is obtained by connecting and extending three existing approaches available for such an analysis: the matrix geometric approach, the compensation approach and the boundary value problem method. In this paper, we also present the spectral properties of the infinite matrix $\bm{R}$.\\

\noindent
{\bf Keywords}: Random walks; Stability condition; Equilibrium distribution; Matrix geometric approach; Spectrum; Compensation approach; Boundary value problem method.
\end{abstract}

\section{Introduction}
The objective of this work is to demonstrate how to obtain  the stability condition and the equilibrium distribution of the state of a two-dimensional homogeneous nearest neighbour (simple) random walk restricted on the lattice using it's underlying Quasi-Birth-Death (QBD) structure and the matrix geometric approach. This type of random walk can be modelled as a QBD process with the  characteristic that both the levels and the phases are countably infinite. Then, based on the matrix geometric approach, if $\bm{\pi}_{n}=\begin{array}{c c c }( \pi_{n,0} & \pi_{n,1}& \cdots)\end{array}$ denotes the  vector of the equilibrium distribution at level $n$, $n=0,1,\ldots$, it is known that $\bm{\pi}_{n+1}=\bm{\pi}_{n}\bm{R}$, $n\geq 1$. This is a very well known result, but the complexity of the solution lies in the calculation of the infinite dimension matrix $\bm{R}$. In this paper, we investigate how the matrix geometric approach can be extended to the case of countably infinite phases and we discuss the challenges that arise with such an extension. Furthermore, we propose an approach for the calculation of the eigenvalues and eigenvectors of matrix $\bm{R}$, that complements the existing approaches for this type of random walks. Moreover, this approach can be numerically used for the approximation of the matrix $\bm{R}$ by considering spectral truncation instead of the usual state space truncation.\\

\subsection{Literature overview}
The main body of literature on the topic of countably infinite phases of QBDs is devoted to either stochastic processes in which the matrix $\bm{R}$ has a simple structure or to asymptotic results concerning the decay rate. \\

Regarding QBDs with a simple structure the literature is mainly devoted to stochastic processes for which the rate matrix has a simple property, see, e.g., \cite{Due} and the references therein,  or to stochastic process with a special structure in the allowed transitions, see, e.g., \cite{Katehakis2,Katehakis1} and the references therein. In \cite{Due}, the authors consider an infinite rate matrix $\bm{R}$ that can be written as the product of a vector column times a row vector. This simple structure permits, under the assumptions of homogeneity and irreducibility, to show that the stationary distributions of these processes have a product form structure as a function of the level. Additionally, they apply their results in the case of the Cox(k)/M${}^Y$/1 queue. In \cite{Katehakis2,Katehakis1} and the references therein, the authors investigate stochastic processes with a special structure regarding the allowed transitions satisfying the so called successive lumpability property. This class of Markov chains is specified through its property of calculating the stationary probabilities of the system by successively computing the stationary probabilities of a propitiously constructed sequence of Markov chains. Each of the latter chains has a (typically much) smaller state space and this yields significant computational improvements. Such models are a special class of the GI/M/1 or the M/G/1 type of QBD processes. In \cite{Katehakis2,Katehakis1}, the authors discuss how the results for discrete time Markov chains extend to semi-Markov processes and continuous time Markov processes. Furthermore, in \cite{Katehakis1}, the authors compare the successive lumping methodology developed with the  lattice path counting approach \cite{24} for the calculation of the rate matrix of a queueing model. These two methodologies are compared both in terms of applicability requirements and numerical complexity by analysing their performance for some classical queueing models. Their main findings are: i) When both methods are applicable the successive lumping based algorithms outperform the lattice path counting algorithm. ii) The successive lumping algorithms, contrary to the lattice path counting algorithm, includes a method to compute the steady state distribution using this rate matrix.\\

Regarding the work devoted on the decay rate of QBDs with infinite phases, in \cite{Takahashi}, the authors provide a sufficient condition for the geometric decay of the steady-state probabilities of a QBD process with a countable number of phases in each level. As an example, the authors apply the result to a two-queue system with the shorter queue discipline. In addition, in \cite{Haque}, the authors present sufficient conditions, under which the stationary probability vector of a QBD process with both infinite levels and phases decays geometrically. They also present a method to compute the convergence norm and the left-invariant vector of the rate matrix $\bm{R}$ based on spectral properties of the censored matrix of a matrix function constructed with the repeating blocks of the transition matrix of the QBD process. Their method reduces the determination of the convergence norm to the determination of the zeros of a polynomial. In \cite{Motyer}, the authors consider the class of level-independent QBD processes that have countably many phases and generator matrices with tridiagonal blocks that are themselves tridiagonal and phase independent. They derive simple conditions for possible decay rates of the stationary distribution of the `level' process. Their results generalise those of Kroese, Scheinhardt, and Taylor \cite{Taylor}, who studied in detail a particular example, the tandem Jackson network.\\

Special attention is also paid to the explicit calculation of the decay rates. In \cite{Li}, the authors investigate the geometric tail decay of the stationary distribution for a GI/G/1 type quasi-birth-and-death process and apply their result to a generalised join-the-shortest-queue model. Furthermore, they establish the geometric tail asymptotics along the direction of the difference between the two queues. In \cite{Miyazawa}, the author considers a double QBD process, i.e. a skip-free random walk in the two-dimensional positive quadrant with homogeneous reflecting transitions at each boundary face. For this stochastic process, the author investigates the tail decay behaviour of the stationary distribution using the matrix geometric method. Furthermore, the author exemplifies the decay rates for Jackson networks and for some modifications of Jackson networks.\\

The effect of state space truncation in the case of countably infinite phases has also received considerable attention. In \cite{Latouche1}, the authors study a system of two queues with boundary assistance, represented as a continuous-time QBD process. Under their formulation, the QBD at hand has a `doubly infinite' number of phases. For this process, the authors determine the convergence norm of the rate matrix $\bm{R}$ and, consequently, the interval in which the decay rate of the infinite system can lie. Moreover, they consider four sequences of finite-phase approximations: one is derived by truncating the infinite system without augmentation, the others are obtained by using different augmentation schemes that ensure that the generator of the QBD remains conservative. The sequences of rate matrices for the truncated system without augmentation and one of the sequences with augmentation have monotonically increasing spectral radii that approach the convergence norm of the infinite matrix $\bm{R}$ as the truncation point tends to infinity; the two other sequences of finite matrices have spectral radii that are constant irrespective of the truncation size, and not equal to the convergence norm of the infinite matrix.

\subsection{Background: finite phases}
Consider a QBD with finite phases, say $m<\infty$, and infinitesimal generator
\begin{eqnarray}\label{generator_matrix}
\bm{Q}=\begin{bmatrix}
  \bm{B} & \bm{A}_{1}&0 &0&\cdots \\
  \bm{A}_{-1}  & \bm{A}_{0} &\bm{A}_{1}  &0&\cdots \\
0&\bm{A}_{-1} & \bm{A}_{0} &\bm{A}_{1}  &\cdots \\
0&0&\bm{A}_{-1}  & \bm{A}_{0}  &\cdots \\
\vdots&\vdots&\vdots&\vdots&\ddots
 \end{bmatrix} ,
 \end{eqnarray}
where the matrices $\bm{A}_1$ and $\bm{A}_{-1}$ are nonnegative, and the matrices $\bm{A}_0$ and $\bm{B}$ have nonnegative off-diagonal elements and strictly negative diagonals. The row sums of $\bm{Q}$ are equal to zero and we assume that the process is irreducible.\\

The stability condition (sufficient and necessary condition) of such a QBD, see  \cite[Theorem ~1.7.1]{Neuts},  can be obtained by the drift condition
 \begin{eqnarray}\label{DriftCondition}
\bm{x}'\bm{A}_{1}\mathds{1}<\bm{x}'\bm{A}_{-1}\mathds{1},
 \end{eqnarray}
with $\bm{A}_{1}$, $\bm{A}_{0}$, $\bm{A}_{-1}$ the transition $m\times m$ rate matrices capturing the rates to a higher level, within the same level and to a lower level, respectively, and $\bm{x}'$ a row vector obtained as  the unique solution  to
$\bm{x}'(\bm{A}_{1}+\bm{A}_{0}+\bm{A}_{-1})=0$ such that $\bm{x}' \mathds{1}  =1$, with $\mathds{1}$ a column vector of ones. \\

The equilibrium distribution of the QBD, see \cite[Theorem~6.4.1]{Latouche},  can be obtained as
\begin{eqnarray*}\bm{\pi}_{n+1}=\bm{\pi}_{n}\bm{R}\ \Rightarrow\ \bm{\pi}_{n}=\bm{\pi}_{0}\bm{R}^{n},\ n=0,1,\ldots, \end{eqnarray*}
with $\bm{\pi}_{n}=\begin{array}{c c c c}( \pi_{n,0} & \pi_{n,1}& \cdots& \pi_{n,m-1}).\end{array}$
The matrix $\bm{R}$ records the rate of sojourn in the states of level $n+1$ per unit of the local time of level $n$.
Furthermore, $\bm{R}=\bm{A}_1\bm{N}$, where the matrix $\bm{N}$ records the expected sojourn times in the states of the level $n$, starting from level $n$, before the first visit to level $n-1$. \\

If the rate matrix $\bm{R}$ is decomposable (i.e., Jordan canonical form), then the equilibrium distribution can be written as \begin{equation}\bm{\pi}_n=\sum_{k} \alpha^n_k \bm{p}_k,\ n=0,1,\ldots,\label{background}
\end{equation}
where  $\alpha_k$ are the  eigenvalues of matrix $\bm{R}$ and $\bm{p}_k$ are the left eigenvectors. The above result is based on the analysis presented in \cite[Section~1.6]{Neuts} and the references therein. Most typically, further analysis is restricted to the  case the roots $\alpha_k$ are distinct. In such a case, the vectors $\bm{p}_k$ are then determined up to  a multiplicative constant. Such results can be proven in a straightforward way using spectral theory, see, e.g., \cite{Hunter, Reed}: For an $m\times m$ complex matrix  $\bm{R}$, with $m$ distinct eigenvalues $\{\alpha_1,\ldots,\alpha_m\}$,  there exists an orthonormal basis of $\mathbb{C}^{m\times m}$, say $\{\bm{E}_1, \ldots, \bm{E}_m\}$, with $\bm{E}_k\in\mathbb{C}^{m\times m}$, and $\bm{R}$ can be unitarily diagonalised as follows
\[{\displaystyle  \bm{R} = \sum_{k=1}^m \alpha_k\bm{E}_k.}\]
This diagonalisation yields that ${\bm{R}^n = \sum_{k=1}^m \alpha_k^n\bm{E}_k,\ n=0,1,\ldots}$. Then, it is evident that
\begin{align*}
\bm{\pi}_n&=\bm{\pi}_0\bm{R}^n= \sum_{k=1}^m \alpha_k^n \bm{\pi}_0\bm{E}_k=\sum_{k=1}^m \alpha^n_k \bm{p}_k,\ n=0,1,\ldots,
\end{align*}
see \cite[Section~9.1]{Hunter}.\\

In this paper, we describe how to extend the drift condition \eqref{DriftCondition} to the case of random walks on the lattice, and we investigate the extensions of spectral theory, and in particular of Equation \eqref{background}, in the case of  infinite ``diagonalisable'' matrices. In this case, we also investigate how to obtain the eigenvalues and the eigenvectors of the rate matrix. To this end, we restrict our analysis to random walks whose invariant measures can be viewed as a series extension of Equation \eqref{background}. \\

The paper is organised as follows: in Section \ref{sec-2} the model is described and in Section \ref{sec-3.1.1} the stability condition is derived. In Section \ref{sec-3} the three relevant methods for the calculation of the equilibrium distribution are sketched; more concretely,  the matrix geometric approach is presented in Section \ref{sec-3.1.2}, the compensation approach in Section \ref{sec-3.2} and the boundary value problem method in Section \ref{sec-3.3}. In Section \ref{sec-4}, we present structural and spectral properties of the invariant measure and the corresponding infinite dimension matrix $\bm{R}$.
In Section \ref{sec-6}, we show how to calculate recursively the eigenvalues of matrix $\bm{R}$, Section \ref{sec-6.1}, and the corresponding eigenvectors, Section \ref{sec-6.2}. Thereafter, in Section \ref{sec-6.3}, we describe the algorithm for the recursive calculation of the eigenvalues and eigenvectors, and in the subsequent section, \ref{sec-6.4}, we discuss the numerical properties of spectral truncation for the matrix $\bm{R}$. In Section \ref{sec-7}, we illustrate the application of our approach to the paradigm of the join the shortest queue and compare our results with the relevant literature. Finally, in Section \ref{Conclusions}, conclusions and future work is discussed.

\section{Model description }\label{sec-2}
As a first step and for illustration purposes, as well as for reasons of simplicity, we restrict our analysis to  a class of Markov processes on the lattice in the positive quadrant of $\mathbb{R}^2$, random walks, whose equilibrium distribution away from the origin $(0,0)$ can be written as a series (finite or infinite) of product-forms. More concretely, we consider random walks for which the transition rates are constant, i.e. they do not depend on the state, and we further assume that transitions are restricted to neighbouring states. However, one has to be careful, since not all random walks with constant transition rates to neighbouring states have an equilibrium distribution that can be written as series of product-forms. For this reason in this paper, we restrict our analysis to a sub-class of such random walks for which it is guaranteed that the equilibrium distribution will have the desired form.
The transition rates of the sub-class of interest to us are depicted in Figure \ref{fig:trd}. As it is evident from the figure, such processes can be modelled using the theory of QBDs.  This is due to the nearest neighbour transition structure.\\

\begin{figure}%
\centering%
\includegraphics{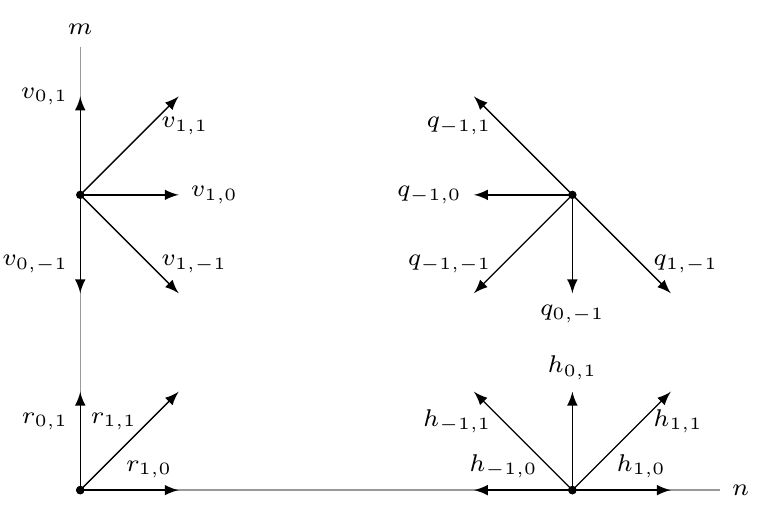}
\caption{Transition rate diagram of the homogeneous simple random walk on the state space $(n,m)$ with no transitions in the interior to the North, North-East, and East. Only the transitions at a few selected states are depicted as an indication.}%
\label{fig:trd}
\end{figure}%

Under the following necessary conditions, referred to as {\em  conditions for meromorphicity}, cf. \cite{MR1138205, ChenPhD,CohenJAP}, the invariant measure  of the simple random walk restricted on the lattice can be written as an infinite series of product-forms for all states away from the origin $(0,0)$.
\begin{condition}\label{conditions-CA}
\begin{description}	
\item[Step size:] Only transitions to neighbouring states are allowed;
\item[Forbidden steps:] No transitions from interior states to the North, North-East, and East are allowed;
\item[Homogeneity:] In the interior, all transitions in the same direction occur according to the same rate;
\end{description}
\end{condition}

Furthermore, in order to avoid the random walk exhibiting a trivial behaviour, we consider the following assumptions.
\begin{asmp}\label{Assumptions}
\begin{itemize}
\item[i)] Non-zero rate to the South: $q_{-1,-1}+q_{0,-1}+q_{1,-1}>0$;
\item[ii)] Non-zero rate to the West: $q_{-1,-1}+q_{-1,0}+q_{-1,1}>0$;
\item[iii)] Non-zero reflecting rate for the horizontal axis: $h_{-1,1}+h_{0,1}+h_{1,1}>0$;
\item[iv)] Non-zero reflecting rate for the vertical axis: $v_{1,1}+v_{1,0}+v_{1,-1}>0$;
\item[v)] Non-zero rate out of state 0: $r_{0,1}+r_{1,1}+r_{1,0}>0$.
\end{itemize}
\end{asmp}

For the random walk under consideration, the invariant measure has the desired structure, that of series of product-forms, as stated in the following theorem.
\begin{thm}\label{Thm2.33}{\normalfont \cite[Theorem 2.33]{MR1138205}}
Under the Conditions for meromorphicity \ref{conditions-CA} and Assumptions \ref{Assumptions}, and given the stability condition, there exists an $N\in\mathbb{Z}_+$, such that for $n+m>N$, the invariant measure $\pi_{n,m}$ can be written
\begin{align}\label{mainInvM}
\pi_{n,m}=\sum_{(\alpha_0,\beta_0)}c(\alpha_0,\beta_0) x_{n,m}(\alpha_0,\beta_0),
\end{align}
where $(\alpha_0,\beta_0)$ runs through the set of at most four feasible pairs and $c(\alpha_0,\beta_0)$ is an appropriately chosen coefficient and
\begin{align}
x_{n,m}(\alpha_0,\beta_0)&=c_0\alpha_0^n\beta_0^m+\sum_{k=1}^\infty c_k\alpha_k^n(\beta_{k-1}^m+f_k\beta_k^m)
,\ n,m>0,\label{1}\\
x_{n,0}(\alpha_0,\beta_0)&=\sum_{k=0}^\infty e_k\alpha_k^n,\ n>0,\label{pin0}\\
x_{0,m}(\alpha_0,\beta_0)&=\sum_{k=0}^\infty d_k\beta_k^m,\ m>0.\label{pi0m}
\end{align}
\end{thm}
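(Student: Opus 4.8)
The plan is to establish this through the \emph{compensation approach}, building the series in \eqref{1}--\eqref{pi0m} term by term so that each successive product-form repairs the balance error left by its predecessor on one of the two axes. First I would write down the global balance equations, separating the interior equation (valid for $n,m>0$) from the two boundary equations along the horizontal axis ($m=0$) and the vertical axis ($n=0$), together with the corner equation at the origin. Substituting a single product-form $\alpha^n\beta^m$ into the interior equation and cancelling $\alpha^n\beta^m$ yields the \emph{kernel equation}
\begin{equation*}
\sum_{(i,j)} q_{i,j}\bigl(\alpha^{-i}\beta^{-j}-1\bigr)=0,
\end{equation*}
which, because the \textbf{Forbidden steps} condition removes the North, North-East and East jumps, clears to a polynomial that is quadratic in $\alpha$ for each fixed $\beta$ and quadratic in $\beta$ for each fixed $\alpha$. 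This degree-two-in-each-variable property is exactly the structural feature that makes the alternating construction possible: to every admissible $\beta$ it associates a conjugate pair of $\alpha$-roots, and to every admissible $\alpha$ a conjugate pair of $\beta$-roots.

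Next I would identify the admissible starting pairs. Any linear combination of product-forms whose $(\alpha,\beta)$ all lie on the kernel curve automatically satisfies the interior balance, so the two axis equations are the only remaining constraints. I would show that the requirement of satisfying one axis equation while keeping the moduli inside the region of decay singles out \emph{at most four} feasible initial pairs $(\alpha_0,\beta_0)$, which accounts for the outer sum in \eqref{mainInvM}. Fixing one such pair, the compensation proceeds as follows: the term $\alpha_0^n\beta_0^m$ violates, say, the horizontal boundary equation, and I adjoin the term $\alpha_1^n\beta_0^m$, where $\alpha_1$ is the second kernel root for the same $\beta_0$ and the coefficient is forced by the horizontal equation; this new term in turn violates the vertical boundary, which I repair by adding $\alpha_1^n\beta_1^m$, where $\beta_1$ is the second kernel root for $\alpha_1$, and so on. The pairs thus generated are precisely $(\alpha_0,\beta_0),(\alpha_1,\beta_0),(\alpha_1,\beta_1),(\alpha_2,\beta_1),\dots$, which regroup into the form \eqref{1} with the ratios $f_k$ and $c_{k+1}/c_k$ read off from the two boundary equations at each stage. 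The one-dimensional balance along each axis then yields the separate expansions \eqref{pin0} and \eqref{pi0m} in the $\alpha_k$ and $\beta_k$ alone, whose coefficients $e_k$ and $d_k$ are matched to the interior series through the near-axis and corner equations.

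The step I expect to be the main obstacle is the \emph{convergence} of the resulting infinite series, and with it the emergence of the threshold $N$. I would need to prove that the alternating root selection can be carried out consistently --- choosing at each stage the root of smaller modulus so that $|\alpha_{k+1}|<|\alpha_k|$ and $|\beta_{k+1}|<|\beta_k|$ --- that the sequences neither stall at a double root nor leave the admissible region, and that they decay geometrically; here the meromorphicity Conditions \ref{conditions-CA} and Assumptions \ref{Assumptions} are what should guarantee the non-degeneracy needed at every step. Given such geometric decay, a ratio-test argument shows that \eqref{1}--\eqref{pi0m} converge absolutely exactly when $n$ and $m$ are large enough, i.e.\ for $n+m>N$, which explains why the representation holds only away from the origin: near the origin the boundary interaction prevents the pure compensation series from converging, and the finitely many excluded states are absorbed into the constant $N$. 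Controlling these moduli and ruling out the degenerate coincidences of roots is where the technical weight of the argument lies.
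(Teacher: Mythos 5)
Your proposal takes the same route as the paper: the paper does not reprove this result (it is imported from \cite[Theorem 2.33]{MR1138205}), but its Section \ref{sec-3.2} sketches precisely the compensation-approach argument you outline --- the kernel equation from the interior balance, quadratic in each of $\alpha$ and $\beta$; the alternating compensation on the two axes generating the pairs $(\alpha_0,\beta_0),(\alpha_1,\beta_0),(\alpha_1,\beta_1),(\alpha_2,\beta_1),\dots$; the grouping into $\alpha_k^n(\beta_{k-1}^m+f_k\beta_k^m)$; and the convergence of the resulting series as the remaining technical burden, which both you and the paper defer rather than resolve.

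There is, however, one concrete error: you have swapped the roles of the two boundaries. A term $c_1\alpha_1^n\beta_0^m$ sharing $\beta_0$ with its predecessor but carrying a new $\alpha_1$ cannot repair a violation of the \emph{horizontal} ($m=0$) boundary equations. Writing $P(\alpha,\beta)$ for the polynomial obtained by substituting $\alpha^n\beta^m$ into the balance equation at a state $(n,0)$, those equations must hold for every $n>0$, so the combination requires $c_0\alpha_0^{n}P(\alpha_0,\beta_0)+c_1\alpha_1^{n}P(\alpha_1,\beta_0)=0$ for all $n$; since geometric sequences with distinct ratios $\alpha_0\neq\alpha_1$ are linearly independent, each term must vanish separately and no choice of $c_1$ works. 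A same-$\beta$/new-$\alpha$ term repairs the \emph{vertical} ($n=0$) boundary, where the common factor $\beta_0^m$ divides out and leaves a single linear equation for $c_1/c_0$; symmetrically, a same-$\alpha$/new-$\beta$ term repairs the horizontal boundary. This is exactly the paper's convention in Step 2 of Section \ref{sec-3.2}: the starting pair satisfies the kernel \emph{and the horizontal boundary}, and the first compensation term (same $\beta_0$, new $\alpha_1$) restores the vertical boundary. The asymmetry is not cosmetic, because the construction cannot be started from the vertical boundary --- as the paper notes in Section \ref{sec-6.1}, the system $K(x,y)=0$, $B(x,y)=0$ need not have a feasible root --- so which axis anchors the starting pair matters. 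With the two labels exchanged consistently throughout, your argument coincides with the paper's.
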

The invariant measure of the states close to the origin, e.g. $\pi_{0,0}$, can be obtained as a function of \eqref{1}-\eqref{pi0m} by solving the corresponding system of equilibrium equations\.\\

In order to simplify the notation, in the sequel we assume that there is a single $(\alpha_0,\beta_0)$ pair (out of the possible four) and that $N=0$. The latter assumption can be easily justified using the QBD structure of the system and spectral theory. This is a topic that we will further discuss in the sequel, see Remark \ref{remark-As2}. For now, we state the above as assumptions, but the only purpose of these two assumptions is the simplification of the notation and the easier exposition of the results.
\begin{asmp}\label{Assumptions2}
\begin{itemize}
\item[vi)] The sum in Equation \eqref{mainInvM} runs through a single pair $(\alpha_0,\beta_0)$, and can be therefore simplified. Furthermore, in order to further simplify the notation we set $c(\alpha_0,\beta_0)=1$.
\item[vii)] Equation \eqref{mainInvM} is valid for all $n+m>0$, i.e. Equation \eqref{mainInvM} is not valid only for state $(0,0)$.
\end{itemize}
\end{asmp}

Some examples of queueing systems that fall in the sub-class of random walks depicted in Figure \ref{fig:trd} and satisfy Assumptions \ref{Assumptions}--\ref{Assumptions2}  are the $2\times2$ switch and the join the shortest queue, see e.g.,  \cite{MR1833660} and the references therein. Also, it is notable to mention that there exist random walks that violate the conditions for meromorphicity, but still exhibit an invariant measure which can be written as a finite sum of product-forms, such as e.g., the two-station Jackson networks. For an extensive treatment of random walks, with an invariant measure representable with a finite sum of product-forms, the interested reader is referred to \cite{Chen}. In this paper, we focus on the spectral properties of processes satisfying the conditions for meromorphicity, but our results can be extended in the case the invariant measure is represented by a finite sum of product-forms. Such spectral properties were first investigated in \cite{Taylor}.  \\

\subsection{QBD structure\label{sec-3.1}}
For the model described in the section above (cf. Figure \ref{fig:trd}), we can define either $n$ to be the {\em level} and $m$  the {\em phase} or the reverse, i.e. $m$ to be the  level and $n$  the phase. For the derivation of the stability condition this flexibility will prove valuable, while in the case of the calculation of the equilibrium distribution it is crucial which dimension plays the role of the level. To this purpose, we provide two generator representations, the first say $\bm{G}^{\footnotesize H}$, corresponds to the case where $n$ is chosen as the  level, while the second  $\bm{G}^{\footnotesize V}$, corresponds to the case where $m$ is chosen as the level. The generators (depending on the choice of the level) of the random walk can be written as follows
\begin{eqnarray}\label{generator_matrix}
\bm{G}^{\footnotesize H}=\begin{bmatrix}
  \bm{B}_0^{\footnotesize H} & \bm{B}_{1}^{\footnotesize H}&0 &0&\cdots \\
  \bm{A}_{-1}^{\footnotesize H} & \bm{A}_{0}^{\footnotesize H}&\bm{A}_{1}^{\footnotesize H} &0&\cdots \\
0&\bm{A}_{-1}^{\footnotesize H} & \bm{A}_{0}^{\footnotesize H}&\bm{A}_{1}^{\footnotesize H} &\cdots \\
0&0&\bm{A}_{-1}^{\footnotesize H} & \bm{A}_{0}^{\footnotesize H} &\cdots \\
\vdots&\vdots&\vdots&\vdots&\ddots
 \end{bmatrix} \ \text{and}\
\bm{G}^{\footnotesize V}=\begin{bmatrix}
  \bm{B}_0^{\footnotesize V} & \bm{B}_{1}^{\footnotesize V}&0 &0&\cdots \\
  \bm{A}_{-1}^{\footnotesize V} & \bm{A}_{0}^{\footnotesize V}&\bm{A}_{1}^{\footnotesize V}&0&\cdots \\
0&\bm{A}_{-1}^{\footnotesize V} & \bm{A}_{0}^{\footnotesize V}&\bm{A}_{1}^{\footnotesize V} &\cdots \\
0&0&\bm{A}_{-1}^{\footnotesize V} & \bm{A}_{0}^{\footnotesize V} &\cdots \\
\vdots&\vdots&\vdots&\vdots&\ddots
 \end{bmatrix}
 \end{eqnarray}
 with
 \begin{eqnarray*}
 &&
\bm{A}_{-1}^{\footnotesize H}=\begin{bmatrix}
  h_{-1,0}& h_{-1,1}&0 &0&\cdots \\
q_{-1,-1} & q_{-1,0}&q_{-1,1} &0&\cdots \\
0&q_{-1,-1} & q_{-1,0}&q_{-1,1} &\cdots \\
0&0&q_{-1,-1} & q_{-1,0}&\cdots \\
\vdots&\vdots&\vdots&\vdots&\ddots
 \end{bmatrix}
 \ \text{and}\
 \bm{A}_{-1}^{\footnotesize V}=\begin{bmatrix}
  v_{0,-1}& v_{1,-1}&0 &0&\cdots \\
q_{-1,-1} & q_{0,-1}&q_{1,-1} &0&\cdots \\
0&q_{-1,-1} & q_{0,-1}&q_{1,-1} &\cdots \\
0&0&q_{-1,-1} & q_{0,-1}&\cdots \\
\vdots&\vdots&\vdots&\vdots&\ddots
 \end{bmatrix}
\\
&& \bm{A}_{0}^{\footnotesize H}=\begin{bmatrix}
 -h& h_{0,1}&0 &0&\cdots \\
q_{0,-1} & -q&0&0&\cdots \\
0&q_{0,-1} & -q&0&\cdots \\
0&0&q_{0,-1} &-q&\cdots \\
\vdots&\vdots&\vdots&\vdots&\ddots
 \end{bmatrix}
 \ \text{and}\
 \bm{A}_{0}^{\footnotesize V}=\begin{bmatrix}
 -v& v_{1,0}&0 &0&\cdots \\
q_{-1,0} & -q&0&0&\cdots \\
0&q_{-1,0} & -q&0&\cdots \\
0&0&q_{-1,0} &-q&\cdots \\
\vdots&\vdots&\vdots&\vdots&\ddots
 \end{bmatrix}
 \\
  && \bm{A}_{1}^{\footnotesize H}=\begin{bmatrix}
h_{1,0} & h_{1,1}&0 &0&\cdots \\
q_{1,-1} & 0&0 &0&\cdots \\
0&q_{1,-1} & 0&0 &\cdots \\
0&0&q_{1,-1} & 0 &\cdots \\
\vdots&\vdots&\vdots&\vdots&\ddots
 \end{bmatrix}
 \ \text{and}\
 \bm{A}_{1}^{\footnotesize V}=\begin{bmatrix}
v_{0,1} & v_{1,1}&0 &0&\cdots \\
q_{-1,1} & 0&0 &0&\cdots \\
0&q_{-1,1} & 0&0 &\cdots \\
0&0&q_{-1,1} & 0 &\cdots \\
\vdots&\vdots&\vdots&\vdots&\ddots
 \end{bmatrix}
 \end{eqnarray*}
 with $h=\sum_{s=-1}^1\sum_{t=-1}^1h_{s,t}=h_{1,0}+h_{1,1}+h_{0,1}+h_{-1,1}+h_{-1,0}$,   $v=\sum_{s=-1}^1\sum_{t=-1}^1v_{s,t}=v_{1,0}+v_{1,1}+v_{0,1}+v_{1,-1}+v_{0,-1}$,   and
$q=\sum_{s=-1}^1\sum_{t=-1}^1q_{s,t}=q_{-1,1}+q_{-1,0}+q_{-1,-1}+q_{0,-1}+q_{1,-1}$. One may write in a similar manner the matrices corresponding to level 0 denoted with $\bm{B}$.

\section{Stability condition\label{sec-3.1.1}}
In this section, we demonstrate how to extend the stability condition described on page \pageref{background} for the case $m=+\infty$. In particular,  in the following proposition we give an equivalent
sufficient and necessary stability condition in the case of infinite phases. Our proof relies on connecting the QBD drift condition derived by Neuts with the known drift condition for nearest neighbour random walks presented in  \cite[Theorem~1.2.1]{FIM}.
\begin{thm}[Theorem~1.2.1 \cite{FIM}] \label{ThmFIM}
For a homogeneous discrete time nearest neighbour random walk, with one step transition probabilities $\{p_{(n,m),(n',m')}\}$, ${(n,m),(n',m')\in \mathbb{Z}_+\times\mathbb{Z}_+}$, let
\begin{align*}
\bm{M}&=(M_x,M_y) = \Big(\sum\limits_{n',m'\geq 0 }(n'-n)p_{(n,m),(n',m')},\sum\limits_{n',m'\geq 0 }(m'-m)p_{(n,m),(n',m')}\Big),\ n,m>0;\\
\bm{M}'&=(M'_x,M'_y) = \Big(\sum\limits_{n',m'\geq 0 }(n'-n)p_{(n,0),(n',m')},\sum\limits_{n',m'\geq 0 }(m'-m)p_{(n,0),(n',m')}\Big),\ n>0;\\
\bm{M}''&=(M''_x,M''_y) = \Big(\sum\limits_{n',m'\geq 0 }(n'-n)p_{(0,m),(n',m')},\sum\limits_{n',m'\geq 0 }(m'-m)p_{(0,m),(n',m')}\Big),\ m>0.
\end{align*}
Then, when, $\bm{M}\neq0$, the homogeneous nearest neighbor random walk is ergodic if and only if, one of the following three conditions holds:
\begin{itemize}
\item[i)] $M_x<0$, $M_y<0$, $M_xM'_y-M_yM'_x<0$, and $M_yM''_x-M_xM''_y<0$;
\item[ii)] $M_x<0$, $M_y\geq0$, and $M_yM''_x-M_xM''_y<0$;
\item[iii)] $M_x\geq0$, $M_y<0$, and $M_xM'_y-M_yM'_x<0$.
\end{itemize}
\end{thm}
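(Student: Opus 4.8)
This is the ergodicity criterion of \cite{FIM}, so I sketch a proof by Foster's mean-drift criterion; an alternative, which is the route the paper pursues, recasts the same conditions through the QBD drift condition of Neuts. The plan is to test the chain against the linear function $f(n,m)=x\,n+y\,m$, $x,y>0$, and to compute the one-step mean increment $\Delta f(n,m)=\sum_{n',m'\ge0}\big(f(n',m')-f(n,m)\big)p_{(n,m),(n',m')}$. By homogeneity $\Delta f$ takes only three values: $xM_x+yM_y$ in the interior, $xM'_x+yM'_y$ on the axis $m=0$, and $xM''_x+yM''_y$ on the axis $n=0$. As $f$ has finite sublevel sets, Foster's criterion gives positive recurrence (hence ergodicity) once $x,y>0$ are chosen so that all three values are strictly negative outside a finite set. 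For the continuous-time model of Figure \ref{fig:trd} one first uniformises to pass to the embedded probabilities $p_{\cdot,\cdot}$, which rescales every drift by a common positive factor and leaves all signs below unchanged.

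The geometric content of i)--iii) is that, on each boundary axis toward which the interior drift pushes the walk, the \emph{induced} tangential drift of the face process must point back toward the origin. Following the interior flow of slope $M_x/M_y$ down to the axis $m=0$, the induced horizontal drift is proportional to $M'_x-(M_x/M_y)M'_y$, whose sign---recalling that this face is reached only when $M_y<0$---is exactly that of $M_xM'_y-M_yM'_x$; symmetrically the induced vertical drift on $n=0$, relevant when $M_x<0$, has the sign of $M_yM''_x-M_xM''_y$. Case~i), with $M$ in the open third quadrant, reaches both axes and hence needs both induced drifts negative, i.e.\ both determinant conditions; case~ii) ($M_x<0$, $M_y\ge0$) reaches only $n=0$ and needs only $M_yM''_x-M_xM''_y<0$; case~iii) is the mirror image reaching only $m=0$. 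The Lyapunov function realising this is a single linear function when one face is binding, and the maximum of two face-adapted linear functions when both are (case i), the drift of the maximum being controlled by the active piece; in each regime $\Delta f$ is bounded above by a negative constant off a finite set, giving sufficiency.

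For necessity I would show that if none of i)--iii) holds the chain is not positive recurrent. In the strictly transient regime, negating the sign and determinant conditions makes available $x,y>0$ for which all three increments are \emph{positive}; since the walk is nearest neighbour these increments are uniformly bounded, so $f$ evaluated along the chain has mean increments bounded below by a positive constant, tends to $+\infty$, and the chain is transient. The step I expect to be the real obstacle is the borderline, null-recurrent regime, where an induced drift vanishes and a linear (or piecewise-linear) test function yields only a non-strict drift; there one must instead analyse the two induced axis walks directly, or pass to the associated deterministic vector field and argue through its trajectories and a maximum-principle / invariant-measure-growth estimate, as in the analytic treatment of \cite{FIM}. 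A secondary, bookkeeping difficulty is to verify in cases ii) and iii) that the face-adapted gradient keeps the interior drift uniformly negative while simultaneously controlling the single binding axis, so that the exceptional set in Foster's criterion is genuinely finite.
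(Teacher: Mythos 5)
You have sketched a proof of a statement that the paper itself never proves: Theorem \ref{ThmFIM} is quoted verbatim from \cite{FIM} as a known input, and the paper's only original argument here is the proof of Proposition \ref{stability_thm}, which takes the Fayolle--Iasnogorodski--Malyshev criterion for granted, uniformises the continuous-time walk, and matches the Neuts QBD drift quantities against the drifts $\bm{M}$, $\bm{M}'$, $\bm{M}''$. So there is no proof in the paper to compare yours with; the relevant comparison is with the proof in \cite{FIM} itself, and your Foster/Lyapunov route is indeed the one taken there (the paper explicitly describes the FIM result as the one ``based on Lyapunov functions'').

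Judged as a proof, your proposal is a correct plan with genuine gaps, two of which you flag yourself and one you do not. First, the necessity direction is the substantial half of the ``if and only if'', and you defer exactly its hard part: when a determinant condition holds with equality (e.g.\ $M_x<0$, $M_y<0$, $M_xM'_y-M_yM'_x=0$) the walk is null recurrent, and no linear or piecewise-linear test function decides this; \cite{FIM} resolves it through the induced one-dimensional chains and separate recurrence/transience criteria, and without that analysis the stated equivalence is not established. Second, in case i) your Lyapunov function $\max(f_1,f_2)$ does not obviously satisfy Foster's condition: since $\mathbb{E}\max\geq\max\mathbb{E}$, the drift of the maximum can be positive on the ridge $\{f_1=f_2\}$, which runs to infinity through the interior, so having each piece's drift negative where it is ``active'' is not enough; one needs a smoothed maximum or a more careful two-face construction, which is precisely where the technical work of the sufficiency proof lies. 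Third, in the transient regime, ``positive mean increments, hence $f\to+\infty$, hence transient'' must be backed by an actual criterion (with nearest-neighbour, hence bounded, jumps this follows from a standard submartingale or transience criterion, but it should be invoked, not asserted). None of these is a wrong turn --- they are the places where the real content of \cite{FIM} lives --- but as written the proposal is an outline rather than a proof.
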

In the following proposition, using the above result, we prove that the QBD drift condition is necessary and sufficient for the ergodicity of the random walk, by showing how the QBD drift condition is connected to the results of \cite[Theorem~1.2.1]{FIM}.
\begin{prop}\label{stability_thm}
For the nearest neighbour random walk with generator matrices $\bm{G}^{\footnotesize V}$ or $\bm{G}^{\footnotesize H}$, defined in \eqref{generator_matrix}, the following hold:
\begin{itemize}
\item[i)] the continuous time Markov chain with generator $\bm{A}^{\footnotesize V}=\bm{A}_{-1}^{\footnotesize V}+\bm{A}_{0}^{\footnotesize V}+\bm{A}_{1}^{\footnotesize V}$ is ergodic if and only if $M_x<0$;
\item[ii)] the continuous time Markov chain with generator $\bm{A}^{\footnotesize H}=\bm{A}_{-1}^{\footnotesize H}+\bm{A}_{0}^{\footnotesize H}+\bm{A}_{1}^{\footnotesize H}$ is ergodic if and only if $M_y<0$;
\item[iii)] the QBD drift condition
 \begin{eqnarray*}
\bm{x}^{\footnotesize H}\bm{A}_{1}^{\footnotesize V}\mathds{1}<\bm{x}^{\footnotesize V}\bm{A}_{-1}^{\footnotesize V}\mathds{1},
 \end{eqnarray*}
with $\bm{x}^{\footnotesize V}$ the unique solution  to $\bm{x}^{\footnotesize V}\bm{A}^{\footnotesize V}=0$
 such that $\bm{x}^{\footnotesize V} \mathds{1}  =1$, where $\mathds{1}$ a column vector of ones, is equivalent to
$M_xM'_y-M_yM'_x<0$;
\item[iv)]  the QBD drift condition
 \begin{eqnarray*}
\bm{x}^{\footnotesize H}\bm{A}_{1}^{\footnotesize H}\mathds{1}<\bm{x}^{\footnotesize H}\bm{A}_{-1}^{\footnotesize H}\mathds{1},
 \end{eqnarray*}
with $\bm{x}^{\footnotesize H}$ the unique solution  to $\bm{x}^{\footnotesize H}\bm{A}^{\footnotesize H}=0$
 such that $\bm{x}^{\footnotesize H} \mathds{1}  =1$, where $\mathds{1}$ a column vector of ones, is equivalent to
$M_yM''_x-M_xM''_y<0$.
 \end{itemize}
\end{prop}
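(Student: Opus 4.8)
\emph{Proof strategy.} The plan is to treat the two phase processes as homogeneous nearest-neighbour random walks on the phase half-line $\mathbb{Z}_+$ and to evaluate Neuts' mean-drift condition \eqref{DriftCondition} explicitly for each of them, matching the outcome to the determinantal conditions of Theorem \ref{ThmFIM}. For part (i) I would first read off from the repeating blocks $\bm{A}_{-1}^{\footnotesize V},\bm{A}_0^{\footnotesize V},\bm{A}_1^{\footnotesize V}$ that $\bm{A}^{\footnotesize V}=\bm{A}_{-1}^{\footnotesize V}+\bm{A}_0^{\footnotesize V}+\bm{A}_1^{\footnotesize V}$ is the generator of a birth-death chain on the phase index $n$: in the interior it moves up at rate $\lambda_+=q_{1,-1}$ and down at rate $\lambda_-=q_{-1,-1}+q_{-1,0}+q_{-1,1}$, while from the phase-boundary $n=0$ it moves up at rate $\mu_0=v_{1,-1}+v_{1,0}+v_{1,1}$ (Assumptions \ref{Assumptions} guarantee $\lambda_->0$ and $\mu_0>0$, hence irreducibility). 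Since $\lambda_+-\lambda_-=M_x$, the classical positive-recurrence criterion for such a chain (geometric ratio $\rho=\lambda_+/\lambda_-<1$) gives ergodicity $\iff M_x<0$; the symmetric reading of $\bm{A}^{\footnotesize H}$ gives (ii) with $M_y$.

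For parts (iii)--(iv) I would use that, by (i)--(ii), the phase chains are ergodic, so their stationary laws $\bm{x}^{\footnotesize V},\bm{x}^{\footnotesize H}$ exist, are unique and geometric; only the boundary mass is needed. Solving the detailed-balance relations $x_0\mu_0=x_1\lambda_-$ and $x_i\lambda_+=x_{i+1}\lambda_-$ ($i\geq1$) and summing the geometric tail, the normalising denominator collapses because $\lambda_-(1-\rho)=\lambda_--\lambda_+=-M_x$ while the boundary up-rate equals $\mu_0=M_x''$, giving the clean closed form $x_0=-M_x/(M_x''-M_x)$, which lies in $(0,1)$ since $M_x<0$ and $M_x''>0$.

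Next I would compute the column vector of per-phase net level drifts $(\bm{A}_1^{\footnotesize V}-\bm{A}_{-1}^{\footnotesize V})\mathds{1}$: its interior entries all equal $q_{-1,1}-(q_{-1,-1}+q_{0,-1}+q_{1,-1})=M_y$, and its single boundary entry equals $(v_{0,1}+v_{1,1})-(v_{0,-1}+v_{1,-1})=M_y''$. Hence the QBD drift condition $\bm{x}^{\footnotesize V}(\bm{A}_1^{\footnotesize V}-\bm{A}_{-1}^{\footnotesize V})\mathds{1}<0$ reads $x_0M_y''+(1-x_0)M_y<0$; substituting $x_0$ and clearing the positive factor $M_x''-M_x$ yields $M_yM_x''-M_xM_y''<0$. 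The mirror-image computation for $\bm{A}^{\footnotesize H}$ (phase $m$, boundary rates $h_{\cdot,\cdot}$, boundary mass $y_0=-M_y/(M_y'-M_y)$) yields $M_xM_y'-M_yM_x'<0$. In each case the reflecting rates that enter the drift are exactly those of the axis forming the phase-boundary --- the vertical axis ($v$-rates, $\bm{M}''$) for $\bm{G}^{\footnotesize V}$ and the horizontal axis ($h$-rates, $\bm{M}'$) for $\bm{G}^{\footnotesize H}$ --- which establishes the equivalences in (iii)--(iv) and, combined with Theorem \ref{ThmFIM}, the full stability criterion.

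The step I expect to be the main obstacle is conceptual rather than computational: Neuts' drift condition was derived for finitely many phases, so I must justify its use when the phase space is countably infinite. This rests precisely on parts (i)--(ii): ergodicity of the phase chain is what makes $\bm{x}^{\footnotesize V}$ (resp.\ $\bm{x}^{\footnotesize H}$) a genuine, summable probability vector, and the geometric decay of $\bm{x}$ together with the uniformly bounded row sums of $\bm{A}_{\pm1}$ is what guarantees that the infinite inner products $\bm{x}\bm{A}_{1}\mathds{1}$ and $\bm{x}\bm{A}_{-1}\mathds{1}$ converge, so that the finite-phase argument carries over verbatim. The remaining work --- identifying each algebraic combination of the rates $q_{\cdot,\cdot},v_{\cdot,\cdot},h_{\cdot,\cdot}$ with the correct drift component $M_\bullet,M'_\bullet,M''_\bullet$ --- is routine bookkeeping.
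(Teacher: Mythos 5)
Your strategy is essentially the paper's own: both proofs reduce the QBD drift conditions and the conditions of Theorem \ref{ThmFIM} to explicit inequalities in the rates and then match them. The paper does this by uniformising the walk via \eqref{trans-prob}, recording the FIM conditions in rate form (Equations \eqref{M_condition_1}--\eqref{M_condition_4}), and then asserting that ``straightforward calculations'' show the four QBD quantities coincide with these; you actually perform those calculations, identifying $\bm{A}^{\footnotesize V}$ and $\bm{A}^{\footnotesize H}$ as birth--death generators (the content of Remark \ref{remark3.1}), solving for their geometric stationary distributions including the boundary mass $x_0$, and contracting against the per-phase level-drift vectors $(\bm{A}_1-\bm{A}_{-1})\mathds{1}$. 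So yours is the paper's proof with the omitted bookkeeping supplied, plus a check that the infinite inner products $\bm{x}\bm{A}_{\pm1}\mathds{1}$ converge, which is all the ``infinite phase'' issue amounts to here (your closing worry about extending Neuts' finite-phase theorem is not needed: the proposition asserts only equivalences of inequalities, and the ergodicity content is delivered by Theorem \ref{ThmFIM}, not by Neuts' theorem).

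One discrepancy must be flagged. Your computation pairs the $\bm{G}^{\footnotesize V}$ condition (built from $\bm{x}^{\footnotesize V}$, $\bm{A}^{\footnotesize V}_{\pm1}$) with $M_yM''_x-M_xM''_y<0$, and the $\bm{G}^{\footnotesize H}$ condition with $M_xM'_y-M_yM'_x<0$; the literal statement of (iii) and (iv) asserts the opposite pairing. Your pairing is the correct one: the $\bm{G}^{\footnotesize V}$ condition involves only the $q$- and $v$-rates, so it can only be equivalent to the FIM condition involving the vertical-axis drift $\bm{M}''$, i.e.\ \eqref{M_condition_4}, and symmetrically for $\bm{G}^{\footnotesize H}$ and $\bm{M}'$. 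The statement as printed thus has the right-hand sides of (iii) and (iv) interchanged (a typo, consistent with the stray $\bm{x}^{\footnotesize H}$ on the left-hand side of (iii), which should read $\bm{x}^{\footnotesize V}$). You should state explicitly that you prove the corrected pairing rather than claim to have ``established the equivalences in (iii)--(iv)'' as written; taken literally, your derivation contradicts them. A second, minor point: the drifts in Theorem \ref{ThmFIM} are defined from the normalised probabilities \eqref{trans-prob}, so identities such as $\lambda_+-\lambda_-=M_x$ and $\mu_0=M''_x$ hold only up to positive factors that differ between interior and boundary states; this is harmless --- each determinantal condition mixes exactly one interior and one boundary drift, so the factors cancel in sign, exactly as implicitly used in \eqref{M_condition_3}--\eqref{M_condition_4} --- but without that remark your closed form $x_0=-M_x/(M''_x-M_x)$ is not literally true.
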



 \begin{proof}
 For the proof, we will first consider the uniformised (cf. \cite[Section~6.7]{Ross}) discrete time random walk and then use the drift conditions for random walks presented in \cite[Theorem~1.2.1]{FIM}. \\
For the conversion of the  random walk in the quadrant depicted in Figure \ref{fig:trd} to a DTMC, we simply need to consider the following transition probabilities
\begin{equation}\label{trans-prob}
p_{(n,m),(n',m')}=\frac{q_{(n,m),(n',m')}}{\sum\limits_{n',m'\geq 0 }q_{(n,m),(n',m')}},\ n,m, n',m'\geq0,
\end{equation}
where $q_{(n,m),(n',m')}$ denotes the transition rates from state $(n,m)$ to state $(n',m')\neq (n,m)$ for all states in the state space $\mathbb{Z}_+^2$. Of course, this might lead to issues with periodicity, but these can be avoided by allowing fictitious self-transitions (cf. \cite[Section~6.7]{Ross}). Such self-transitions are permitted in our setting due to the fact that the rates are homogeneous and can be therefore bounded by above. However, their introduction will only complicate the notation, and since the calculation of the drifts of Theorem \ref{ThmFIM} and the proof of the proposition are not effected by such self-transitions, we opt to not introduce them in the definition of the transition probabilities of the DTMC \eqref{trans-prob}.\\
For the discrete time random walk the statement of \cite[Theorem~1.2.1]{FIM} reads as follows:
\begin{align}
M_x<0 \ &\Leftrightarrow\ q_{1,-1}<q_{-1,-1}+q_{-1,0}+q_{-1,1},\label{M_condition_1}\\
M_y<0\ &\Leftrightarrow\ q_{-1,1}<q_{-1,-1}+q_{0,-1}+q_{1,-1},\label{M_condition_2}\\
M_xM'_y-M_yM'_x<0 &\Leftrightarrow\
\left[q_{1,-1}-(q_{-1,-1}+q_{-1,0}+q_{-1,1})\right]\left(h_{-1,1}+h_{0,1}+h_{1,1}\right)<\nonumber\\
&\qquad\quad \left[q_{-1,1}- (q_{-1,-1}+q_{0,-1}+q_{1,-1})\right]\left(h_{1,0}+h_{1,1}-h_{-1,1}-h_{-1,0}\right)
,\label{M_condition_3}\\
M_yM''_x-M_xM''_y<0&\Leftrightarrow\
\left[q_{-1,1}-(q_{-1,-1}+q_{0,-1}+q_{1,-1})\right]\left(v_{1,-1}+v_{1,0}+v_{1,1}\right)<\nonumber\\
&\qquad\quad \left[q_{1,-1}- (q_{-1,-1}+q_{-1,0}+q_{-1,1})\right]\left(v_{0,1}+v_{1,1}-v_{1,-1}-v_{0,-1}\right)
.\label{M_condition_4}
\end{align}
Similarly, after straightforward calculations of the four conditions of Proposition \ref{stability_thm} we can easily see that these four quantities match exactly the four drift conditions calculated in \eqref{M_condition_1}-\eqref{M_condition_4}, which concludes the proof of the proposition.
\end{proof}

\begin{remark}\label{remark3.1}
 Equivalently, $\bm{x}^{\footnotesize V}$
 corresponds to the vector of the invariant measure of a Birth-Death process with birth rates $\lambda_0^{\footnotesize V}=q_{1,-1}+q_{1,0}+q_{1,1}$, $\lambda_n^{\footnotesize V}=q_{1,-1}$, $n\geq1$, and death rates $\mu_n^{\footnotesize V}=q_{-1,-1}+q_{-1,0}+q_{-1,1}$, $n\geq 1$. So assertion {\em (i)} of Proposition \ref{stability_thm} can be simplified to $q_{1,-1}<q_{-1,-1}+q_{-1,0}+q_{-1,1}$.\\
 For the random walk at hand, $\bm{x}^{\footnotesize H}$
 corresponds to the vector of the invariant measure of a Birth-Death process with birth rates $\lambda_0^{\footnotesize H}=q_{-1,1}+q_{0,1}+q_{1,1}$, $\lambda_n^{\footnotesize H}=q_{-1,1}$, $n\geq1$, and death rates $\mu_n^{\footnotesize H}=q_{-1,-1}+q_{0,-1}+q_{1,-1}$, $n\geq 1$.  So assertion {\em (iii)} of Proposition \ref{stability_thm} can be simplified to $q_{-1,1}<q_{-1,-1}+q_{0,-1}+q_{1,-1}$.
 \end{remark}

\begin{remark}\label{remark_3.1}
Proposition \ref{stability_thm} can  be also extended, in a similar manner, to the case of nearest neighbour random walks with transitions to the North, North-East and East in the interior.
\end{remark}

\section{Equilibrium analysis: Related work}\label{sec-3}
\subsection{Matrix geometric approach\label{sec-3.1.2}}
Let $\pi_{n,m}$, $n,m\geq 0$ denote the equilibrium distribution of the QBD process. Then, if $\bm{\pi}_{n}=\begin{array}{c c c }( \pi_{n,0} & \pi_{n,1}& \cdots)\end{array}$, $n=0,1,\ldots$, denotes the equilibrium vector of level $n$,  it is known that
\begin{align}
\bm{\pi}_{n+1}=\bm{\pi}_{n}\bm{R},\ n=1,2,\ldots.\label{2a}
\end{align}
This last equation, by recursive application, yields the solution for the equilibrium vector as
\begin{align}
\bm{\pi}_{n}=\bm{\pi}_{1}\bm{R}^{n-1},\ n=1,2,\ldots.\label{2}
\end{align}
in terms of the matrix $\bm{R}$ and the vector of the equilibrium distribution corresponding to level $1$. Moreover, the infinite dimensional matrix $\bm{R}$ is obtained as the minimal non-negative solution to the matrix quadratic equation
\begin{eqnarray*}
\bm{A}_{1}^{\footnotesize H}+\bm{R}\bm{A}_{0}^{\footnotesize H}+\bm{R}^2\bm{A}_{-1}^{\footnotesize H}=0,
\end{eqnarray*}
see, e.g, \cite{Latouche,Neuts}.  Unfortunately, the structure of the random walk is overly generic and thus does not permit the calculation of the infinite matrix $\bm{R}$. This will be achieved by combining the two other approaches used in the analysis of random walks on the lattice: the compensation approach and the boundary value problem method.

\subsection{Compensation approach\label{sec-3.2}}
{\em The compensation approach} is developed by Adan et al. in a series of papers  \cite{MR1138205,MR1833660, MR1241929} and aims at a direct solution for the sub-class of two-dimensional random walks on the lattice of the first quadrant that obey the conditions for meromorphicity. The compensation approach can also be effectively used in  cases that the random walk at hand does not satisfy the aforementioned conditions, but the equilibrium distribution can still be written in the form of series of product-forms  \cite{Adan1,Adan2,Selen}.
This is due to the fact that this approach exploits the structure of the  equilibrium equations in the interior of the quarter plane by imposing that linear (finite or infinite) combinations of product-forms satisfy them. This leads to a kernel equation for the terms appearing in the product-forms. Then, it is required that these linear combinations satisfy the equilibrium equations on the boundaries as well. As it turns out, this can be done by alternatingly compensating for the errors on the two boundaries, which eventually leads to a (potentially) infinite series of product-forms.\\
For the model described in Section \ref{sec-2} one can easily show, cf. \cite[Chapter~2]{MR1138205}, that
\begin{description}
\item[Step 1:] $\pi_{n,m}= \alpha^n\beta^m$, $m,n>0$, is a solution to the equilibrium equations in the interior if and only if  $\alpha$ and $\beta$ satisfy the following kernel equation
\begin{eqnarray}
&&{\alpha}{\beta} (q_{-1,1}+q_{1,-1}+q_{0,-1}+q_{-1,-1}+q_{-1,0})=\nonumber\\
&&\qquad{\alpha}^2q_{-1,1}+{\beta}^2 q_{1,-1}+{\alpha}{\beta}^2q_{0,-1}+{\alpha}^2{\beta}^2q_{-1,-1}+{\alpha}^2{\beta} q_{-1,0}.\ \ \label{kernel_gen}
\end{eqnarray}
\item[Step 2:] Consider a product-form $c_0{\alpha}_0^n{\beta}_0^m$ that satisfies the kernel equation \eqref{kernel_gen} and also satisfies the equilibrium equations of the horizontal boundary. Without loss of generality we can assume that $c_0=1$. If the product-form $c_0{\alpha}_0^n{\beta}_0^m$ also satisfies the equilibrium equations of the vertical boundary then this constitutes the solution of the equilibrium equations up to a multiplicative constant that can be obtained using the normalising equation. Otherwise, consider a linear combination of two product-forms, say $c_0{\alpha}_0^n{\beta}_0^m+c_1{\alpha}^n{\beta}^m$, $m,n>0$, such that this combination satisfies now the  equilibrium equations of the vertical boundary. For this to happen it must be that
${\beta}={\beta}_0$ and then $\alpha={\alpha}_1$ is obtained as the solution of the  kernel equation \eqref{kernel_gen} for ${\beta}={\beta}_0$.
\item[Step 3:] Finally, as long as our expression of linear combinations of product-forms violates one of the two equilibrium equations on the boundary, we continue by adding new product-form terms satisfying  the kernel equation \eqref{kernel_gen}. This will eventually lead to Equations \eqref{1}-\eqref{pi0m}. Of course, one still needs to show that the series expression of Equations \eqref{1}-\eqref{pi0m} converge for all $n,m\geq 0$.
\end{description}
This procedure leads to the statement of Theorem \ref{Thm2.33}.

\subsection{Boundary value problem method\label{sec-3.3}}
{\em The boundary value problem method} is an analytic method which is applicable to some two-dimensional random walks restricted to the first quadrant. The bivariate probability generating function (PGF), say $$\Pi(x,y)=\sum_{n=0}^\infty\sum_{m=0}^\infty \pi_{n,m}x^ny^m,\ |x|,|y|\leq 1,$$ of the position of a homogeneous nearest neighbour random walk satisfies a functional equation of the form
\begin{align}
&K(x,y)\Pi(x,y)+A(x, y)\Pi(x, 0) + B(x, y)\Pi(0, y) + C(x, y)\Pi(0, 0)= 0,\label{funct}
\end{align}
with $K(x,y)$, $A(x,y)$, $B(x,y)$ and $C(x,y)$ known bivariate polynomials in $x$ and $y$, depending only on the  parameters of the random walk. In particular,
\begin{align*}
K(x,y)&=xy\left(  \sum_{s=-1}^1 \sum_{t=-1}^1 x^sy^tq_{s,t}-q\right),\\
&= y^2q_{-1,1}+x^2 q_{1,-1}+xq_{0,-1} +q_{-1,-1}+y q_{-1,0}  -xy (q_{-1,1}+q_{1,-1}+q_{0,-1}+q_{-1,-1}+q_{-1,0}),\\
A(x,y)&=-xy\left(  \sum_{s=-1}^1 \sum_{t=-1}^1 x^sy^t(q_{s,t}-h_{s,t})-(q-h)\right),\\
B(x,y)&=-xy\left(   \sum_{s=-1}^1 \sum_{t=-1}^1 x^sy^t(q_{s,t}-v_{s,t})-(q-v)\right),\\
C(x,y)&=xy\left(   \sum_{s=-1}^1 \sum_{t=-1}^1 x^sy^t(-q_{s,t}+h_{s,t}+v_{s,t}-r_{s,t})-(-q+h+v-r)\right),
\end{align*}
with $r= \sum_{s=-1}^1 \sum_{t=-1}^1 r_{s,t}$. \\
The boundary value problem method consists of the following steps:
\begin{itemize}\label{stepsBVP}
\item[i)] First, define the zero tuples $(x,y)$ such that $K(x,y)=0$, $|x|,|y|<1$.
\item[ii)] Then, along the curve $K(x,y)=0$ (and provided that $\Pi(x,y)$ is defined on this curve), Equation \eqref{funct} reads
\begin{align}
&A(x, y)\Pi(x, 0) + B(x, y)\Pi(0, y) + C(x, y)\Pi(0, 0)= 0.\label{boundary-problem-equation}
\end{align}
\item[iii)] Finally, in same instances, Equation \eqref{boundary-problem-equation} can be solved as a Riemann (Hilbert) boundary value problem.
\end{itemize}
Setting $K(1/{\alpha},1/{\beta})=0$ reduces to exactly Equation \eqref{kernel_gen}, indicating that the compensation approach and the boundary value problem method both utilise the same zero tuples. Furthermore, $A(1/{\alpha},1/{\beta})=0$ reduces to exactly the balance equations for the horizontal boundary satisfied by a product-form solution, i.e. $\pi_{n,m}=\alpha^n\beta^m$, $n,m\geq0$, and similarly $B(1/{\alpha},1/{\beta})=0$ reduces to exactly the balance equations for the vertical boundary.\\

Malyshev pioneered this approach of transforming the functional equation to a boundary value problem in the 1970's. The idea to reduce the functional equation for the generating function to a standard Riemann-Hilbert boundary value problem stems from the work of Fayolle and Iasnogorodski \cite{Fayolle_Iasnogorodski} on two parallel M/M/1 queues with coupled processors (the service speed of a server depends on whether
or not the other server is busy). Extensive treatments of the boundary value technique for functional equations can be found in Cohen and Boxma \cite[Part~II]{Cohen_Boxma} and Fayolle, Iasnogorodski and Malyshev \cite{FIM}. The model depicted in Figure \ref{fig:trd} can be analyzed by the approach developed by Fayolle and Iasnogorodski \cite{Fayolle_Iasnogorodski,FIM} and Cohen and Boxma \cite{Cohen_Boxma}, however this approach does not lead to the direct determination of the equilibrium distribution, since it requires inverting the PGF, and the existing numerical approaches for this method are  oftentimes tedious and case specific.

\section{Invariant measure properties}\label{sec-4}
In this paper, we explore the use of the matrix geometric approach by utilising ideas and results from the compensation approach and the boundary value problem method. Furthermore, through this work we achieve to connect the three approaches and gain valuable insight on the analytic and probabilistic interpretation of the terms appearing in the invariant measure. This connection will be achieved through the use of the PGF $\Pi(x,y)$ and the functional equation \eqref{funct} for the determination of the matrix $\bm{R}$. First and foremost, we show that the $\alpha$'s and $\beta$'s appearing in Theorem \ref{Thm2.33} are connected with the eigenvalues and left eigenvectors of matrix $\bm{R}$. This is established in the following proposition, that connects the derivation of the matrix $\bm{R}$ with the representation of the invariant measure as a series of product-forms, cf. Equation \eqref{mainInvM}, and hence the boundary value problem with the matrix geometric approach.

\begin{prop}\label{thm-1}
The terms $\{\alpha_k\}_{k\geq 0}$ constitute the different eigenvalues of the matrix $\bm{R}$.
For eigenvalue $\alpha_k$ the corresponding left eigenvector of the matrix $\bm{R}$ is
$\bm{p}_k=(p_{k,0}, p_{k,1}, p_{k,2}\ldots)$, with $p_{k,0}=e_k$, $k\geq0$,  $p_{0,m}=c_0\beta_{0}^m$, $m\geq1$, and
$p_{k,m}=c_k(\beta_{k-1}^m+f_k\beta_k^m)$, $k,m\geq1$,
if and only if $c_k\neq 0$, $k\geq0$.
\end{prop}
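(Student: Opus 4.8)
The plan is to recognise the series of Theorem~\ref{Thm2.33} as a single expansion of the level vectors $\bm{\pi}_n$ in the candidate eigenvectors $\bm{p}_k$, and then to read off the eigen-relation from the matrix-geometric recursion \eqref{2a}. First I would check, by a direct reindexing, that for every $n\ge1$
\[
\bm{\pi}_n=\sum_{k\ge0}\alpha_k^n\,\bm{p}_k,
\]
with $\bm{p}_k$ exactly the vector defined in the statement: by \eqref{pin0} the coefficient of $\alpha_k^n$ in $\pi_{n,0}$ is $e_k=p_{k,0}$, while by \eqref{1} the coefficient of $\alpha_k^n$ in $\pi_{n,m}$ ($m\ge1$) is $c_0\beta_0^m=p_{0,m}$ for $k=0$ and $c_k(\beta_{k-1}^m+f_k\beta_k^m)=p_{k,m}$ for $k\ge1$.

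Next I would substitute this representation into $\bm{\pi}_{n+1}=\bm{\pi}_n\bm{R}$. Granting that each $\bm{p}_k\bm{R}$ is well defined and that right-multiplication by $\bm{R}$ may be carried termwise through the sum over $k$, this gives
\[
\sum_{k\ge0}\alpha_k^n\bigl(\alpha_k\bm{p}_k-\bm{p}_k\bm{R}\bigr)=0,\qquad n\ge1.
\]
Writing $\bm{w}_k=\alpha_k\bm{p}_k-\bm{p}_k\bm{R}$, the aim is to conclude $\bm{w}_k=\bm{0}$ for every $k$. Here I would use the fact, inherent to the compensation approach, that the $\alpha_k$ are distinct and can be ordered with $|\alpha_0|>|\alpha_1|>\cdots$ and that the product-form series converge absolutely. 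Fixing a coordinate $m$ and dividing by $\alpha_0^n$, the terms with $k\ge1$ carry factors $(\alpha_k/\alpha_0)^n$ that vanish, so letting $n\to\infty$ forces $w_{0,m}=0$; inductively, once $\bm{w}_0=\dots=\bm{w}_{j-1}=\bm{0}$, dividing by $\alpha_j^n$ and sending $n\to\infty$ yields $\bm{w}_j=\bm{0}$. Thus $\bm{p}_k\bm{R}=\alpha_k\bm{p}_k$ for all $k$.

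It remains to translate this into the stated equivalence. When $c_k\neq0$ the entries $p_{k,m}=c_k(\beta_{k-1}^m+f_k\beta_k^m)$, $m\ge1$, are not all zero, so $\bm{p}_k\neq\bm{0}$ and the relation $\bm{p}_k\bm{R}=\alpha_k\bm{p}_k$ exhibits $\bm{p}_k$ as a genuine left eigenvector for $\alpha_k$; since the $\alpha_k$ are distinct, these are distinct eigenvalues of $\bm{R}$. For the converse I would match the coefficient of $\alpha_k^n$ in the horizontal-boundary balance equations, which expresses $e_k=p_{k,0}$ as a multiple of $c_k$; hence $c_k=0$ makes every entry of $\bm{p}_k$ vanish, and the zero vector is not an eigenvector. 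This yields the ``if and only if $c_k\neq0$''.

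I expect the main obstacle to be the rigorous justification of the peeling step in the infinite-phase setting. Three analytic points must be secured simultaneously: that $\bm{p}_k\bm{R}$ is well defined (each coordinate is a convergent series), that the sum over $k$ commutes with right-multiplication by the infinite matrix $\bm{R}$, and that $\lim_{n\to\infty}$ commutes with $\sum_{k\ge1}(\alpha_k/\alpha_0)^n w_{k,m}$. All three rest on absolute- and uniform-convergence estimates for the product-form series, which is precisely where the geometric decay of the $|\alpha_k|$ and the control on the coefficients supplied by the compensation approach enter; securing these bounds, rather than the algebra of the eigen-relation itself, is the crux.
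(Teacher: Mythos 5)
Your proposal follows essentially the same route as the paper's proof: expand $\bm{\pi}_n=\sum_k\alpha_k^n\bm{p}_k$, substitute into $\bm{\pi}_{n+1}=\bm{\pi}_n\bm{R}$, and peel off the eigen-relations $\bm{p}_k(\alpha_k\bm{I}-\bm{R})=\bm{0}$ one at a time by dividing by the dominant $\alpha_k^n$ and letting $n\to\infty$, using the strict ordering $1>|\alpha_0|>|\alpha_1|>\cdots$. The analytic point you flag as the crux is handled in the paper by the bound $\|\bm{p}_k(\alpha_k\bm{I}-\bm{R})\|_\infty<1/|\alpha_k|$ (obtained from the $n=1$ instance of the substituted identity) together with an interchange of limit and series, so your argument is correct and matches the paper's in structure.
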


\begin{proof}
From \eqref{mainInvM} note that, for $n>0$,
\begin{eqnarray*}
\bm{\pi}_{n}&=&\begin{array}{c c c c }( \pi_{n,0} & \pi_{n,1}& \pi_{n,2}&\cdots)\end{array}\\
&=&\sum_{k=0}^\infty \alpha_k^{n}\bm{p}_k.
\end{eqnarray*}
Plugging this last result into \eqref{2a}, after straightforward manipulations yields
\begin{eqnarray}
\label{foralln}
\sum_{k=0}^\infty \alpha_k^{n}\bm{p}_k
(\alpha_k \bm{I}-\bm{R})=0,\ \forall n>0.
\end{eqnarray}
This last equation yields that
\begin{eqnarray*}
-\bm{p}_0 (\alpha_0 \bm{I}-\bm{R})=\sum_{k=1}^\infty \left(\alpha_k/\alpha_0\right)^{n}\bm{p}_k
(\alpha_k \bm{I}-\bm{R}) ,\ \forall n>0.
\end{eqnarray*}
Thus,
\begin{eqnarray}
0\leq ||\bm{p}_0 (\alpha_0 \bm{I}-\bm{R})||_\infty&\leq& \lim_{n\to\infty}\sum_{k=1}^\infty \left|\alpha_k/\alpha_0\right|^{n}
|| \bm{p}_k (\alpha_k \bm{I}-\bm{R})||_\infty \nonumber\\
&=& \sum_{k=1}^\infty \lim_{n\to\infty}\left|\alpha_k/\alpha_0\right|^{n}
|| \bm{p}_k (\alpha_k \bm{I}-\bm{R})||_\infty =0,
\label{p0eqaul0}
\end{eqnarray}
where the exchange of the series and the limit is allowed by the monotone convergence theorem, as $1>|\alpha_0|>|\alpha_1|>\cdots$, cf \cite[Page~33]{MR1138205}, and $|| \bm{p}_k (\alpha_k \bm{I}-\bm{R})||_\infty<1/|\alpha_k|$, $k\geq0$. The latter property is easily proven by Equation \eqref{foralln} for $n=1$. Therefore, recursively, it is evident that
\begin{eqnarray*}
\bm{p}_k(\alpha_k \bm{I}
-\bm{R})=0,\ \forall k\geq 0,
\end{eqnarray*}
which implies the statement of the proposition, cf. \cite{Shivakumar}.
\end{proof}

\begin{remark}
Note that  matrix $\bm{R}$ is crucial in deriving the first passage times for QBD's, see \cite{Neuts}. Thus the $\alpha$'s and $\beta$'s are connected with the first passage times.
\end{remark}

In the sequel, we present the properties of the spectrum and of the resolvent operator of the infinite matrix $\bm{R}$. Firstly, we present below the corresponding definitions.

\begin{defn}[Resolvent Operator, Spectrum] \label{defnRes}
Let $\mathcal{H}$ be a Hilbert space and let $\bm{R}$ be a linear operator on $\mathcal{H}$.
We write $\rho(\bm{R})$ for the set of all values $\alpha \in \mathbb{C}$ such that $(\alpha \bm{I} -\bm{R})$ is one-one, onto and for which $\mathrm{R}_{\bm{R}}(\alpha)=(\alpha \bm{I} -\bm{R})^{-1}\in \mathcal{L(H)}$, where $\mathcal{L(H)}$ is the Hilbert algebras with identity.
The map $\mathrm{R}_{\bm{R}}: \rho(\bm{R})\to \mathcal{L(H)}$ is called the resolvent operator.
\\
We denote the spectrum of $\bm{R}$ by $\sigma(\bm{R})=\mathbb{C}\setminus \rho(\bm{R})$.
\end{defn}
We would like to note that the spectrum is typically larger than the set of eigenvalues, except for the finite dimensional case of $\mathcal{H}=\mathbb{C}^{n\times n}$. In case of infinite dimension matrices, under suitable restrictions, one may show that the spectrum lies on the real line and (in general) is a spectral combination of a point spectrum of discrete eigenvalues with finite multiplicity, pure points, and a continuous spectrum of an  absolutely continuous part which is dense in $\mathcal{H}$, cf. \cite[Chapter V]{Lorch}. In special cases, e.g. compact, self adjoint operators, the spectrum consists only of the discrete values (together with 0 if the set of eigenvalues is infinite countable), see \cite[Corollary~9.14]{Hunter}. This has the property that we can define an orthonormal basis of $\mathcal{H}$ based on the eigenvectors of $\bm{R}$: The nonzero eigenvalues of $\bm{R}$ form a finite or countably infinite set $\{\alpha_k\}$ of real numbers and $\bm{R}=\sum_{k}\alpha_k\bm{P}_k$, where $\bm{P}_k$ is the orthonormal projection onto the eigenspace of the eigenvectors with eigenvalue $\alpha_k$. If the number of nonzero eigenvalues is countably infinite, then the series $\sum_{k}\alpha_k\bm{P}_k$ converges to $\bm{R}$ in the operator norm.\\

We formulate below a proposition that addresses a key point of the paper: the form of the resolvent operator, which will appear naturally when calculating the bivariate PGF. In addition, we establish that  the spectrum of the matrix $\bm{R}$ consists only of the discrete part of the eigenvalues $\{\alpha_k\}$, cf. Proposition \ref{thm-1}.

\begin{cor}\label{cor-1}
The resolvent operator of the matrix $\bm{R}$ can be calculated in  terms of the eigenvalues $\{\alpha_k\}_{k\geq 0}$ and the corresponding  eigenvectors $\{\bm{p}_k\}$ as follows
\begin{eqnarray}
\bm{\pi}_1\,(\alpha\bm{I}- \bm{R})^{-1}=\sum_{k=0}^\infty \frac{\alpha_k }{\alpha-\alpha_k } \bm{p}_k.\label{Resolvent}
\end{eqnarray}
\end{cor}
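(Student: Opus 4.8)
The plan is to derive the resolvent identity \eqref{Resolvent} directly from the spectral decomposition established in Proposition \ref{thm-1}, using the geometric-series expansion of $(\alpha\bm{I}-\bm{R})^{-1}$ for $|\alpha|$ large and then invoking analyticity of the resolvent to extend the identity to all $\alpha\in\rho(\bm{R})$. First I would recall that, by Proposition \ref{thm-1}, each $\bm{p}_k$ is a left eigenvector of $\bm{R}$ with eigenvalue $\alpha_k$, so that $\bm{p}_k\bm{R}=\alpha_k\bm{p}_k$ and hence $\bm{p}_k\bm{R}^n=\alpha_k^n\bm{p}_k$ for all $n\ge0$. Combined with the series representation $\bm{\pi}_1=\sum_{k=0}^\infty\alpha_k\bm{p}_k$ (which is \eqref{mainInvM} evaluated at level $n=1$ together with $\bm{\pi}_n=\sum_k\alpha_k^n\bm{p}_k$), this gives the clean identity $\bm{\pi}_1\bm{R}^n=\bm{\pi}_{n+1}=\sum_{k=0}^\infty\alpha_k^{\,n+1}\bm{p}_k$ for every $n\ge0$.

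Next I would expand the resolvent as a Neumann series. For $\alpha$ with $|\alpha|$ larger than the spectral radius of $\bm{R}$ (equivalently $|\alpha|>|\alpha_0|$, since $1>|\alpha_0|>|\alpha_1|>\cdots$ by \cite[Page~33]{MR1138205}), one has the operator-norm-convergent expansion
\begin{eqnarray*}
(\alpha\bm{I}-\bm{R})^{-1}=\frac{1}{\alpha}\sum_{n=0}^\infty\Big(\frac{\bm{R}}{\alpha}\Big)^{n}.
\end{eqnarray*}
Applying $\bm{\pi}_1$ on the left and substituting $\bm{\pi}_1\bm{R}^n=\sum_{k=0}^\infty\alpha_k^{\,n+1}\bm{p}_k$ yields
\begin{eqnarray*}
\bm{\pi}_1(\alpha\bm{I}-\bm{R})^{-1}
=\frac{1}{\alpha}\sum_{n=0}^\infty\frac{1}{\alpha^{n}}\sum_{k=0}^\infty\alpha_k^{\,n+1}\bm{p}_k
=\sum_{k=0}^\infty\alpha_k\bm{p}_k\,\frac{1}{\alpha}\sum_{n=0}^\infty\Big(\frac{\alpha_k}{\alpha}\Big)^{n}
=\sum_{k=0}^\infty\frac{\alpha_k}{\alpha-\alpha_k}\,\bm{p}_k,
\end{eqnarray*}
where in the last step I summed the inner geometric series using $|\alpha_k/\alpha|<1$, which holds for every $k$ since $|\alpha_k|\le|\alpha_0|<|\alpha|$. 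This establishes \eqref{Resolvent} on the region $|\alpha|>|\alpha_0|$.

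To obtain the identity on all of $\rho(\bm{R})$ I would appeal to analytic continuation: the resolvent $\alpha\mapsto\bm{\pi}_1(\alpha\bm{I}-\bm{R})^{-1}$ is an $\ell^\infty$-valued analytic function on $\rho(\bm{R})$, and by Corollary's preceding discussion (the spectrum of $\bm{R}$ consists precisely of the discrete eigenvalues $\{\alpha_k\}$) the right-hand side $\sum_{k}\frac{\alpha_k}{\alpha-\alpha_k}\bm{p}_k$ is a meromorphic function with simple poles exactly at the $\alpha_k$. Since the two analytic functions agree on the open set $|\alpha|>|\alpha_0|$, they agree throughout the connected component of $\rho(\bm{R})$ containing it, which completes the proof.

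The main obstacle is the rigorous justification of the interchange of the two infinite summations (over $n$ and over $k$) and of the operator-norm convergence of the Neumann series in the infinite-dimensional setting. This requires controlling $\|\bm{p}_k\|_\infty$ and ensuring that $\sum_k\sum_n|\alpha_k/\alpha|^{n}|\alpha_k|\,\|\bm{p}_k\|_\infty<\infty$; the geometric decay $|\alpha_k|\to0$ helps, but one must verify that the eigenvector norms do not grow too fast, using the bound $\|\bm{p}_k(\alpha_k\bm{I}-\bm{R})\|_\infty<1/|\alpha_k|$ from the proof of Proposition \ref{thm-1} together with the dominated/monotone convergence argument already employed there. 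Once absolute convergence of the double series is secured, Fubini's theorem licenses the interchange and the remaining steps are routine.
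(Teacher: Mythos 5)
Your proof is correct and takes essentially the same route as the paper: the paper likewise equates the two representations of $\bm{\pi}_n$ --- the matrix-geometric form $\bm{\pi}_1\bm{R}^{n-1}$ from \eqref{2} and the product-form series $\sum_k\alpha_k^n\bm{p}_k$ from \eqref{mainInvM} --- inside the generating function $\sum_{n}x^n\bm{\pi}_n$, recognizing the Neumann-series resummation $\bm{\pi}_1(x^{-1}\bm{I}-\bm{R})^{-1}$ on one side and summing the geometric series in $n$ to get $\sum_k\frac{\alpha_k}{x^{-1}-\alpha_k}\bm{p}_k$ on the other, before setting $x^{-1}=\alpha$. Your extra attention to the region of validity $|\alpha|>|\alpha_0|$, the interchange of the double sum, and the analytic continuation to the rest of $\rho(\bm{R})$ supplies rigor that the paper's proof leaves implicit, but it does not change the underlying argument.
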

\begin{proof}
Firstly, note that
\begin{align*}
\Pi(x,y)&=\sum_{n=0}^\infty x^n\, \bm{\pi}_{n}\, (\begin{array}{c c c c }1 & y&y^2& \cdots\end{array})^{\small T}\nonumber\\
&= (\begin{array}{c c c c }\bm{\Pi}_{\mbox{\footnotesize{-}},0}(x) & \bm{\Pi}_{\mbox{\footnotesize{-}},1}(x)&\bm{\Pi}_{\mbox{\footnotesize{-}},2}(x)& \cdots\end{array})\, (\begin{array}{c c c c }1 & y&y^2& \cdots\end{array})^{\small T},
\end{align*}
with $\bm{\Pi}_{\mbox{\footnotesize{-}},m}(x)=\sum_{n=0}^\infty x^n\, {\pi}_{n,m}$, $m\geq0$.
On the one hand, from the  definition of $\bm{\Pi}_{\mbox{\footnotesize{-}},m}(x)$ and Equation \eqref{mainInvM}, we have, for $n>0$,
\begin{eqnarray}
 (\begin{array}{c c c c }\bm{\Pi}_{\mbox{\footnotesize{-}},0}(x) & \bm{\Pi}_{\mbox{\footnotesize{-}},1}(x)&\bm{\Pi}_{\mbox{\footnotesize{-}},2}(x)& \cdots\end{array})&=& (\begin{array}{c c c c }\sum_{n=0}^\infty x^n\, {\pi}_{n,0}& \sum_{n=0}^\infty x^n\, {\pi}_{n,1}&\sum_{n=0}^\infty x^n\, {\pi}_{n,2}& \cdots\end{array})\nonumber\\
&=&\bm{\pi}_{0}+\sum_{n=1}^\infty x^n \sum_{k=0}^\infty \alpha_k^n \bm{p}_k\nonumber\\
&=&\bm{\pi}_{0}+\sum_{k=0}^\infty \frac{\alpha_k }{x^{-1}-\alpha_k } \bm{p}_k,\label{Pi-1}
\end{eqnarray}
where $\bm{p}_k=(p_{k,0}, p_{k,1}, p_{k,2}\ldots)$, with $p_{k,0}=e_k$, $k\geq0$,  $p_{0,m}=c_0\beta_{0}^m$, $m\geq0$,
$p_{k,m}=c_k(\beta_{k-1}^m+f_k\beta_k^m)$, $k,m\geq1$.\\
On the other hand, from Equation \eqref{2}, we obtain, for $n>0$,
\begin{eqnarray}
 (\begin{array}{c c c c }\bm{\Pi}_{\mbox{\footnotesize{-}},0}(x) & \bm{\Pi}_{\mbox{\footnotesize{-}},1}(x)&\bm{\Pi}_{\mbox{\footnotesize{-}},2}(x)& \cdots\end{array})&=&\sum_{n=0}^\infty x^n\, \bm{\pi}_{n}\nonumber\\
&=&\bm{\pi}_{0}+\sum_{n=1}^\infty x^n\, \bm{\pi}_{1}\,\bm{R}^{n-1}\nonumber\\
&=&\bm{\pi}_{0}+\bm{\pi}_1\,(x^{-1}\bm{I}- \bm{R})^{-1}.\label{Pi-2}
\end{eqnarray}
Note that Equations \eqref{Pi-1} and \eqref{Pi-2} are two different representations of the same vector of partial PGFs and are therefore equal. Setting $x^{-1}=\alpha$ in these two representations concludes the proof of the corollary.
\end{proof}
\begin{remark}\label{remark-As2}
Note that the proof of Corollary \ref{cor-1} justifies the choice we made in Assumption \ref{Assumptions2}, to set $N=0$.
\end{remark}

From the form of the resolvent operator \eqref{Resolvent}, in combination with known properties of the  eigenvalues $\{\alpha_k\}_{k\geq 0}$ and the corresponding  eigenvectors $\{\bm{p}_k\}$, we can now establish the properties of the spectrum of matrix $\bm{R}$.

\begin{thm}\label{MainThm}
Under $\bm{\pi}_1$, the spectral properties of $\bm{R}$ are:
\begin{itemize}
\item[i)] Every nonzero $\alpha_k\in \sigma(\bm{R})$ is an eigenvalue of $\bm{R}$, i.e. the spectrum consists only of the eigenvalues and does not have a continuous part.
\item[ii)] $\sigma(\bm{R})$ is at most countably infinite.
\item[iii)] The eigenvalues only accumulate at 0. Furthermore, $0\in\sigma(\bm{R})$ (since the dimension of $\bm{R}$ is not finite).
\item[v)] Every nonzero $\alpha_k\in \sigma(\bm{R})$ is a pole of the resolvent function $\alpha\to (\alpha\bm{I}- \bm{R})^{-1}$.
\end{itemize}
\end{thm}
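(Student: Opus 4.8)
The plan is to read off all four assertions directly from the explicit resolvent representation of Corollary \ref{cor-1}, by treating
\[
\alpha\longmapsto \bm{\pi}_1\,(\alpha\bm{I}-\bm{R})^{-1}=\sum_{k=0}^\infty \frac{\alpha_k}{\alpha-\alpha_k}\,\bm{p}_k
\]
as a vector-valued meromorphic function of the scalar $\alpha$. The inputs I would use are: the $\alpha_k$ are distinct eigenvalues with nonzero eigenvectors $\bm{p}_k$ (Proposition \ref{thm-1}); the moduli are strictly decreasing with $1>|\alpha_0|>|\alpha_1|>\cdots$ and $\alpha_k\to 0$ (cf. \cite[Page~33]{MR1138205}); and the bound $||\bm{p}_k(\alpha_k\bm{I}-\bm{R})||_\infty<1/|\alpha_k|$ established in the proof of Proposition \ref{thm-1}.

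First I would prove assertion (v) together with the easy inclusion needed for (i). For each fixed $k$ the summand $\frac{\alpha_k}{\alpha-\alpha_k}\bm{p}_k$ has a simple pole at $\alpha=\alpha_k$ with residue $\alpha_k\bm{p}_k$, which is nonzero since $\alpha_k\neq0$ and $\bm{p}_k\neq\bm{0}$. The remaining terms are analytic in a punctured neighbourhood of $\alpha_k$ (the $\alpha_j$ with $j\neq k$ are bounded away from $\alpha_k$, and the tail converges locally uniformly there), so $\alpha_k$ is a genuine simple pole of the resolvent, which is (v). In particular each $\alpha_k$ lies in $\sigma(\bm{R})$, giving the inclusion $\{\alpha_k:k\geq0\}\subseteq\sigma(\bm{R})$ (this also follows from Proposition \ref{thm-1}, since the $\alpha_k$ are eigenvalues).

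Next I would settle the converse inclusion, which yields (i), and then (ii)--(iii). I would show $\sigma(\bm{R})\setminus\{0\}=\{\alpha_k:k\geq0\}$ by fixing $\alpha\neq0$ with $\alpha\neq\alpha_k$ for all $k$ and verifying $\alpha\in\rho(\bm{R})$. Because $\alpha_k\to0$, the distance $\delta:=\inf_k|\alpha-\alpha_k|$ is strictly positive, so the coefficients satisfy $|\alpha_k/(\alpha-\alpha_k)|\leq|\alpha_k|/\delta\to0$; combined with $||\bm{p}_k(\alpha_k\bm{I}-\bm{R})||_\infty<1/|\alpha_k|$ this forces the resolvent series to converge and lets one check that $(\alpha\bm{I}-\bm{R})$ is one-one, onto, and boundedly invertible. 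Hence no nonzero point outside $\{\alpha_k\}$ lies in the spectrum and there is no continuous part, proving (i). Countability (ii) is then immediate, since the nonzero spectrum equals the countable set $\{\alpha_k\}$. For (iii), strict monotonicity forces $|\alpha_k|$ to converge, and the compensation-approach construction identifies the limit as $0$, so $0$ is the unique accumulation point; finally $0\in\sigma(\bm{R})$ because the spectrum is closed and contains the eigenvalues $\alpha_k\to0$ (equivalently, with $\bm{R}=\sum_k\alpha_k\bm{P}_k$ and $\alpha_k\to0$, $\bm{R}$ cannot be boundedly invertible on the infinite-dimensional space).

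The hard part will be the converse inclusion in (i). Corollary \ref{cor-1} supplies only the action of the resolvent on the single vector $\bm{\pi}_1$, so upgrading it to the genuine statement that $(\alpha\bm{I}-\bm{R})$ is boundedly invertible on all of $\mathcal{H}$ requires exploiting that $\bm{\pi}_1$ is a cyclic (generating) vector for $\bm{R}$ --- which holds precisely because every $c_k\neq0$, so $\bm{\pi}_1$ has a nonzero component along each eigenvector $\bm{p}_k$ --- together with summability estimates that guarantee operator-norm convergence of the resolvent series uniformly for $\alpha$ bounded away from $\{\alpha_k\}\cup\{0\}$, and that rule out residual spectrum. Once (i) is in hand, the pole structure (v) and the countability and accumulation statements (ii)--(iii) are comparatively routine bookkeeping.
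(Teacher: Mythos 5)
Your proposal is built from the same two ingredients as the paper's own (very terse) proof: the resolvent representation of Corollary \ref{cor-1}, and the facts about $\{\alpha_k\}$, $\{\beta_k\}$ imported from the compensation approach ($|\alpha_k|<1$, strictly decreasing moduli, $\alpha_k\to 0$, and the pairing of kernel roots). The difference is one of ambition and of how the statement is read. The paper's proof does no more than list those three facts and assert that ``combining the above results with the form of the resolvent, the proof of the theorem follows''; crucially, the theorem is stated \emph{under} $\bm{\pi}_1$, i.e.\ the spectral data are read off the vector-valued function $\alpha\mapsto\bm{\pi}_1(\alpha\bm{I}-\bm{R})^{-1}$ seen through the single vector $\bm{\pi}_1$, so the converse inclusion you flag as ``the hard part'' never arises at the level of the full operator. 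Within that reading, your treatment of (v), (ii), (iii) and of the inclusion $\{\alpha_k\}\subseteq\sigma(\bm{R})$ (simple poles with nonzero residues $\alpha_k\bm{p}_k$, countability, accumulation only at $0$, and $0\in\sigma(\bm{R})$ by closedness of the spectrum) is correct and substantially more detailed than what the paper records. Your plan for the converse inclusion --- upgrading to bounded invertibility of $(\alpha\bm{I}-\bm{R})$ on all of $\mathcal{H}$ via cyclicity of $\bm{\pi}_1$ (which indeed rests on every $c_k\neq 0$, by Proposition \ref{thm-1}) --- is a strictly stronger goal than the paper pursues, and it is exactly there that a genuine gap remains: operator-norm convergence of the resolvent series, and the exclusion of residual spectrum, require quantitative control of $\|\bm{p}_k\|$ (through $|c_k|$, $|f_k|$ and $|\beta_k|<1$) that neither you nor the paper make explicit; the bound $\|\bm{p}_k(\alpha_k\bm{I}-\bm{R})\|_\infty<1/|\alpha_k|$ alone does not suffice. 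So for the theorem as the paper intends it your argument is complete and sharper than the original; the unproved part is precisely the stronger operator-theoretic claim that the paper's ``Under $\bm{\pi}_1$'' qualifier is designed to avoid.
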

\begin{proof}
For the proof of the theorem it is necessary to show that
\begin{itemize}
\item[i)] $\{\alpha_k\}$ and $\{\beta_k\}$ tend to zero as $k$ tends to infinity, cf \cite[Page~33]{MR1138205};
\item[ii)] $|\alpha_k|<1$ and $|\beta_k|<1$, cf \cite[Page~33]{MR1138205};
\item[iii)]For every fixed $\alpha$, with $0<|\alpha|<1$, there is exactly one $\beta$, with $0<|\beta|<|\alpha|$ and one root $\beta$ with $|\beta|>|\alpha|$. The same holds for $\alpha$ and $\beta$ interchanged, cf. \cite[Lemma~2.7]{MR1138205}.
\end{itemize}
Combining the above results, with the form of the resolvent, the proof of the theorem follows.
\end{proof}

\begin{remark}
It is worthy to mention some of the spectral properties arising in the case of random walk with an invariant measure representable with a finite sum of product-forms, $\pi_{n,m}=\sum_{k=1}^K c_k \alpha_k^n \beta_k^m$, $n,m\geq1$, such as the Jackson network of two stations in tandem ($K=1$). In these cases, similarly to our case one may prove that $\{\alpha_k\}$ are the eigenvalues of the matrix $\bm{R}$ and $\{\beta_k\}$ appear in the definition of the eigenvectors. Considering that now there is a finite number of real eigenvalues of matrix $\bm{R}$, which can be used to create the eigenvectors of the orthonormal basis of $\mathcal{H}$, it is needed to consider for the basis also the eigenvectors arising with eigenvalue zero, cf. the proof in \cite[Theorem~9.16]{Hunter}. In this sense, eigenvalue zero has an infinite multiplicity. In addition the spectrum may contain an absolutely continuous part. Thus, truncating the state space of the phase and considering a finite matrix $\bm{R}$ might lead  to the issues noticed in \cite{Taylor,Latouche1,Motyer}.
\end{remark}

\section{Calculation of the invariant measure}\label{sec-6}
In this section we turn our focus to the recursive calculation of the ${\alpha}$'s and ${\beta}$'s. To this purpose, we first compute the bivariate PGF in terms of the resolvent of the matrix $\bm{R}$.
\begin{eqnarray}
\Pi(x,y)&=&\sum_{n=0}^\infty x^n\, \bm{\pi}_{n}\, (\begin{array}{c c c c }1 & y&y^2& \cdots\end{array})^{\small T}\nonumber\\
&=&\left( \bm{\pi}_{0}+\sum_{n=1}^\infty x^n\, \bm{\pi}_{1}\,\bm{R}^{n-1}\right)\, (\begin{array}{c c c c }1 & y&y^2& \cdots\end{array})^{\small T}\nonumber\\
&=&\left( \bm{\pi}_{0}+\bm{\pi}_1\,(x^{-1}\bm{I}- \bm{R})^{-1} \right)(\begin{array}{c c c c }1 & y&y^2& \cdots\end{array})^{\small T}.\label{PGF-QBD}
\end{eqnarray}
Substituting the above result in the functional equation \eqref{funct} yields after some straightforward manipulations
\begin{align*}
&\bm{\pi}_1\,(x^{-1}\bm{I}- \bm{R})^{-1} \Big( K(x,y) (\begin{array}{c c c c }1 & y&y^2& \cdots\end{array})^{\small T} +A(x, y) (\begin{array}{c c c c }1 & 0&0& \cdots\end{array})^{\small T}\Big) \nonumber\\
&\quad =-\,\bm{\pi}_0\,\Big( \left(K(x,y)+B(x, y) \right)(\begin{array}{c c c c }1 & y&y^2& \cdots\end{array})^{\small T} +\left(A(x,y)+C(x, y)\right) (\begin{array}{c c c c }1 & 0&0& \cdots\end{array})^{\small T}\Big).
\end{align*}
Using the result of Corollary \ref{cor-1} in the above expression immediately yields
\begin{align*}
&\sum_{k=0}^\infty \frac{\alpha_k }{x^{-1}-\alpha_k } \bm{p}_k \Big( K(x,y) (\begin{array}{c c c c }1 & y&y^2& \cdots\end{array})^{\small T} +A(x, y) (\begin{array}{c c c c }1 & 0&0& \cdots\end{array})^{\small T}\Big) \nonumber\\
&\quad =-\,\bm{\pi}_0\,\Big( \left(K(x,y)+B(x, y) \right)(\begin{array}{c c c c }1 & y&y^2& \cdots\end{array})^{\small T} +\left(A(x,y)+C(x, y)\right) (\begin{array}{c c c c }1 & 0&0& \cdots\end{array})^{\small T}\Big).
\end{align*}
Equivalently, by defining $\bm{e}_j$ to be a infinite dimension column vector with a 1 in the $i$-th position and 0 elsewhere, the above equation reduces to
\begin{align*}
&\sum_{k=0}^\infty \frac{\alpha_k }{x^{-1}-\alpha_k } \bm{p}_k \Big( K(x,y) \sum_{i=1}^\infty y^{i-1}\bm{e}_i+A(x, y) \bm{e}_1\Big)=-\,\bm{\pi}_0\,\Big( \left(K(x,y)+B(x, y) \right)\sum_{i=1}^\infty y^{i-1}\bm{e}_i+\left(A(x,y)+C(x, y)\right)  \bm{e}_1 .
\end{align*}
After straightforward calculations the above can be equivalently written as
\begin{align}\label{funct-MAM}
&\sum_{k=0}^\infty \frac{\alpha_k }{x^{-1}-\alpha_k } \sum_{i=1}^\infty ( K(x,y)  y^{i-1}p_{k,i-1}+A(x, y) p_{k,0}\delta_{\{i=1\}}) \nonumber\\
&\quad =-\sum_{i=1}^\infty\Big( \left(K(x,y)+B(x, y) \right) y^{i-1}\pi_{0,i-1}+\left(A(x,y)+C(x, y)\right)\pi_{0,0}\delta_{\{i=1\}}\Big) ,
\end{align}
with $\delta_{\{\cdot\}}$ an indicator function taking value 1 if the event $\{\cdot\}$ is satisfied and 0 otherwise.\\

Note that, we can meromorphically continue the bivariate PGF on the entire complex domain,  i.e., the bivariate PGF has a finite number of poles in every finite domain, cf. \cite{Cohen-asq}. More concretely, the PGF is holomorphic on the entire complex domain except for a set of isolated points (the poles of the function) $x^{-1}={\alpha}_k$ and $y^{-1}={\beta}_k$, $k\geq0$. \\

We use Equation \eqref{funct-MAM} to show for self-completeness an alternative way in recursively calculating the terms appearing in the expressions for the invariant measure, cf. Theorem \ref{Thm2.33}. Firstly, we describe in the following paragraph how to  recursively obtain the sequence of $\alpha$'s. Thereafter, we present an iterative approach for the calculation of the eigenvectors of the matrix $\bm{R}$.

\subsection{Recursive calculation of the eigenvalues}\label{sec-6.1}
For the recursive calculation of the $\alpha$'s and $\beta$'s, we use the first step of the boundary value problem (see assertion i) on page \pageref{stepsBVP}). There the main idea lies on defining the zero tuples $(x,y)$, inside $|x|,|y|<1$, such that the kernel $K(x,y)=0$ of the functional equation \eqref{funct-MAM} becomes zero. To this purpose, we set $x^{-1}={\alpha}_0$ in \eqref{funct-MAM}. Note that, for  $|y|<1$, the right hand side of \eqref{funct-MAM} is well defined, which implies that
\begin{align*}
\sum_{i=1}^\infty \big( K({\alpha}_0^{-1},y)  y^{i-1}p_{0,i-1}+A({\alpha}_0^{-1}, y)p_{0,0}\delta_{\{i=1\}} \big)=0.
\end{align*}
%
%
%
Restricting the investigation on the set of $y$-roots that satisfy $K({\alpha}_0^{-1},y) =0$, the above equation yields that $A({\alpha}_0^{-1},y)=0$, since $p_{0,0}\neq0$. Thus choosing the $\alpha_0$ such that $K({\alpha}_0^{-1},y) =0$ and $A({\alpha}_0^{-1},y)=0$ reveals the starting solution ${\alpha}_0$, with $|{\alpha}_0|<1$, for the iterative calculation of the sequences $\{{\alpha}_k\}_{k\geq 0}$ and $\{{\beta}_k\}_{k\geq 0}$.\\

The existence of the solution ${\alpha}_0$ inside the unit disk is proved in \cite[Theorem~2.19]{MR1138205} by considering a random walk that exhibits the same behaviour in the interior and the horizontal boundary, see \cite[Figure~2.5,~page 41]{MR1138205}. More concretely, under Assumptions \ref{Assumptions} and the stability condition, it is proven in \cite[Theorem~2.19]{MR1138205} that:
\begin{itemize}
\item[i)] if $h_{1,1}>0$, then there are two feasible ${\alpha}_0$-roots, one in $(0,1)$ and the other in in $(-1,0)$;
\item[ii)] if $h_{1,1}=0$, then there is one feasible ${\alpha}_0$-root, located in $(0,1)$.
\end{itemize}
Furthermore, it is also proven that choosing ${\alpha}_0\in(0,1)$ guarantees that the created series expression of product-forms for the equilibrium distribution converges, see \cite[Theorem~2.33]{MR1138205}.\\

For the starting solution $x^{-1}={\alpha}_0$ we  calculate recursively $y^{-1}={\beta}_0$ by the kernel equation $K(x,y)=0$. This will produce a single ${\beta}$ with $|{\beta}|< |{\alpha}|$, see the proof of Theorem \ref{MainThm}. \\

Note that we cannot start in the reverse order, that is, first calculate ${\beta}_0$ and thereafter $\alpha_0$, since the system $K(x,y)=0$ and $B(x,y)=0$ does not necessarily have a $y^{-1}(={\beta}_0)$-root inside the unit disk. The proof of this statement follows simply by constructing a random walk which exhibits in the interior and on the vertical boundary the same behaviour as the original random walk and showing that this new walk is non-ergodic. The constructed new random walk follows the same reasoning as in the proof of the existence of the starting ${\alpha}_0$, see, e.g., \cite[Figure~2.5]{MR1138205} or \cite[Figure~2]{Adan2}.\\

We can proceed in an analogous manner and construct recursively the entire set of product-form terms setting $K(\alpha^{-1},\beta^{-1})=0$. This will produce the entire sequence $\{\alpha_k\}$ and $\{\beta_k\}$, see Figure~\ref{fig:alpha_beta} for an illustration of the evolution of the $\alpha$ and $\beta$ terms.\\

\begin{figure*}[hb]%
\centering%
\includegraphics{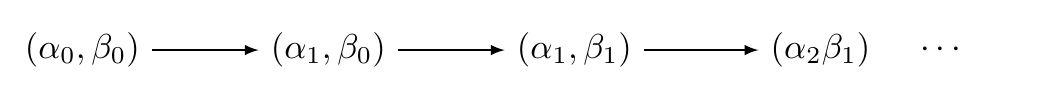}
\caption{The recursive structure of the product-form terms.}%
\label{fig:alpha_beta}%
\end{figure*}%

We can explore, similarly to the boundary value problem method, the fact that the kernel equation $K(x,y)=0$ is a quadratic function in either $y$ or $x$. This allows us to re-write the kernel function as follows
\begin{align*}
&K(x,y)=a_x(y-y_+(x))(y-y_-(x)),\\
&K(x,y)=\tilde{a}_y(x-x_+(y))(x-x_-(y))
\end{align*}
for some functions $a_x$ and $\tilde{a}_y$. This  yields
\begin{align}
&y_+\left({\alpha_k}^{-1}\right)={\beta_k}^{-1},\qquad\quad x_+\left({\beta_k}^{-1}\right)={\alpha_{k+1}}^{-1},\label{y+}\\
&y_-\left({\alpha_k}^{-1}\right)={\beta_{k-1}}^{-1},\qquad x_-\left( {\beta_k}^{-1}\right)={\alpha_k}^{-1}.\label{y-}
\end{align}
The above equations produce explicitly the sequence of the zero-tuples of the kernel equation, cf. Figure~\ref{fig:alphabetaunique}, with 'forward' operators $y_+$ and $x_+$
and 'backward' operators $y_-$ and $x_-$ constructed as the solutions of the kernel equation $K(x,y)=0$ with respect to $y$ and $x$.

\begin{figure*}[h!]%
\centering%
\includegraphics{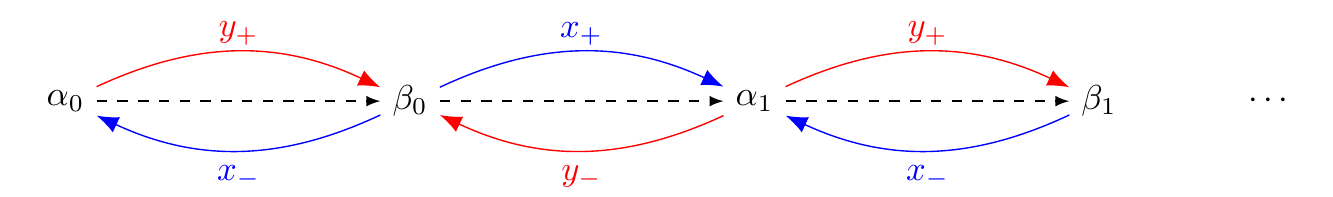}
\caption{Evolutions of ${\alpha}$'s and ${\beta}$'s.}%
\label{fig:alphabetaunique}%
\end{figure*}%

\subsection{Recursive calculation of the eigenvectors}\label{sec-6.2}
It remains to show how to calculate the eigenvectors of the matrix $\bm{R}$. Before we can calculate the eigenvectors, we will first determine $\bm{\pi}_0$. The reason behind this choice is that the elements of the vector $\bm{\pi}_0$ appear on the right hand side of \eqref{funct-MAM}.
First, we set $y=y_+(x)$ in \eqref{funct-MAM} (for which $K(x, y_+(x))=0$), this yields
\begin{align*}
&\sum_{k=0}^\infty \frac{\alpha_k }{x^{-1}-\alpha_k } A(x, y_+(x)) p_{k,0}  =-\sum_{i=1}^\infty\Big( B(x, y_+(x))  y_+(x)^{i-1}\pi_{0,i-1}+\big(A(x,y_+(x))+C(x, y_+(x))\big)\pi_{0,0}\delta_{\{i=1\}}\Big).
\end{align*}
Note that $\pi_{0,i-1}$ can be substituted by its series expression \eqref{mainInvM}, this permits us to rewrite the above equation as
\begin{align*}
0=\sum_{k=0}^\infty \frac{\alpha_k }{x^{-1}-\alpha_k } A(x, y_+(x)) p_{k,0}  + B(x, y_+(x))  \sum_{k=0}^\infty d_k \frac{\beta_k}{y_+(x)^{-1}-\beta_k}+\big(A(x,y_+(x))+B(x, y_+(x))+C(x, y_+(x))\big)\pi_{0,0}.
\end{align*}
We multiply the resulting equation with $x^{-1}-\alpha_k$ and take the limit as $x\rightarrow \alpha_k^{-1}$. This yields
\begin{align}
0=\alpha_k  A\left({\alpha_{k}}^{-1},{\beta_{k}}^{-1}\right)p_{k,0}
+B\left({\alpha_{k}}^{-1},{\beta_{k}}^{-1}\right)d_{k}\beta_k\lim\limits_{x\to {\alpha_{k}}^{-1}}
\frac{x^{-1}-\alpha_{k}}{y_+(x)^{-1}-\beta_{k}}.\label{coeff1}
\end{align}
Similarly, repeating the above procedure for $y=y_-(x)$ and as $x\to\alpha_{k+1}^{-1}$ produces
\begin{align}
0=\alpha_{k+1}  A\left({\alpha_{k+1}}^{-1},{\beta_{k}}^{-1}\right)p_{k+1,0}
+B\left({\alpha_{k+1}}^{-1},{\beta_{k}}^{-1}\right)d_{k}\beta_{k}\lim\limits_{x\to {\alpha_{k+1}}^{-1}}
\frac{x^{-1}-\alpha_{k+1}}{y_-(x)^{-1}-\beta_{k}}.\label{coeff2}
\end{align}
Instead, we could have chosen $x=x_\pm(y)$,
in which case the resulting equations would be identical to the ones derived above.\\

Now starting from $p_{0,0}$ all the elements $\{p_{k,0}\}_{k\geq0}$ and $\{d_{k}\}_{k\geq0}$
are obtained recursively as follows: for a given $p_{k,0}$ using Equation \eqref{coeff1} one can
derive $d_k$, next given $d_k$ Equation \eqref{coeff2} produces
 $p_{k+1,0}$.\\

 Having  recursively identified in terms of $p_{0,0}$ the elements $\{p_{k,0}\}_{k\geq0}$ and $\{d_{k}\}_{k\geq0}$, we show in the sequel how to obtain the unknown terms $\{p_{k,i}\}$, $j\geq1$. To this purpose, we set $x^{-1}={\alpha}_k$ in \eqref{funct-MAM} (equivalently, we can multiply with $x^{-1}-\alpha_k$ and take the limit as $x\rightarrow \alpha_k^{-1}$). Note that, for  $|y|<1$, the right hand side of \eqref{funct-MAM} is well defined, which implies that
\begin{align}\label{eigenvectors}
K({\alpha}_k^{-1},y)\sum_{i=1}^\infty  y^{i}p_{k,i}+\big(  A({\alpha}_k^{-1}, y)+K({\alpha}_k^{-1},y)\big)p_{k,0}=0,\ |y|<1.
\end{align}
Taking derivative in $y$ in \eqref{eigenvectors} and setting $y=0$ yields
\begin{align}
p_{k,1}&=-\left.\frac{\mathrm{d}\big(  A({\alpha}_k^{-1}, y)+K({\alpha}_k^{-1},y)\big)}{\mathrm{d}y}\right|_{y=0}\frac{p_{k,0}}{K({\alpha}_k^{-1},0)}\nonumber\\
&= - \frac{h_{-1,0}+\alpha_k^{-2}h_{1,0}-\alpha_k^{-1} h}{\alpha_k^{-2}q_{1,-1}+\alpha_k^{-1}q_{0,-1}+q_{-1,-1}}\,p_{k,0}
.\label{eigenvector0}
\end{align}
Taking second derivative in $y$ in \eqref{eigenvectors} and setting $y=0$ yields
\begin{align}
p_{k,2}&=-\left.\frac{\mathrm{d}^2\big(  A({\alpha}_k^{-1}, y)+K({\alpha}_k^{-1},y)\big)}{\mathrm{d}y^2}\right|_{y=0}\frac{p_{k,0}}{K({\alpha}_k^{-1},0)}-
2\left.\frac{\mathrm{d}\big(  K({\alpha}_k^{-1},y)\big)}{\mathrm{d}y}\right|_{y=0}\frac{p_{k,1}}{K({\alpha}_k^{-1},0)}
\nonumber\\
&= -\frac{2h_{-1,1}+2\alpha_k^{-1}h_{0,1}+2\alpha_k^{-2} h_{1,1}}{\alpha_k^{-2}q_{1,-1}+\alpha_k^{-1}q_{0,-1}+q_{-1,-1}}\,p_{k,0}
 -2\frac{q_{-1,-1}+\alpha_k^{-1}q_{0,-1}+\alpha_k^{-2}q_{1,-1}}{\alpha_k^{-2}q_{1,-1}+\alpha_k^{-1}q_{0,-1}+q_{-1,-1}}\,p_{k,1}
.\label{eigenvector1}
\end{align}
Taking higher derivatives in $y$ in \eqref{eigenvectors} and setting $y=0$ yields
\begin{align}
0&=\binom{i}{0}i!p_{k,i}K(\alpha_k^{-1},0)+
\binom{i}{1}(i-1)!p_{k,i-1}\left.\frac{\mathrm{d}\big(  K({\alpha}_k^{-1},y)\big)}{\mathrm{d}y}\right|_{y=0}+
\binom{i}{2}(i-2)!p_{k,i-2}\left.\frac{\mathrm{d}^2\big(  K({\alpha}_k^{-1},y)\big)}{\mathrm{d}y^2}\right|_{y=0}
\nonumber\\
&=i!(q_{-1,-1}+\alpha_k^{-1}q_{0,-1}+\alpha_k^{-2}q_{1,-1})\,p_{k,i} +
i!\,q_{-1,0}\, p_{k,i-1}+
i!\,q_{-1,1}\, p_{k,i-2}
.\label{eigenvectori}
\end{align}
Note that for the construction of the above system it is not needed to choose $y=0$, but we can instead choose any $|y|\leq 1$, however the choice of $y=0$ simplifies significantly the calculations.

\subsection{Description of the algorithm}\label{sec-6.3}
The iterative procedure is as follows:
\begin{description}
\item[Step 1)] Derive the $\alpha$'s and $\beta$'s: First obtain $\alpha_0$ by solving the system
\[A(\alpha_0^{-1},y)=0\text{ and }K(\alpha_0^{-1},y)=0.\]
Then, calculate iteratively the rest of the $\alpha$ and $\beta$ terms as depicted in Figure \ref{fig:alphabetaunique} and according to the functions $y_+$ and $x_+$ defined in \eqref{y+}.
\item[Step 2)] Starting from $p_{0,0}$, we  derive $d_0$ from Equation \eqref{coeff1}. Thereafter, use $d_0$ to calculate $p_{1,0}$ from Equation \eqref{coeff2}. Continue in this manner and determine recursively the sequences $\{d_k\}$ and $\{p_{k,0}\}$.
\item[Step 3)] Determine $p_{k,i}$, for $i=1,2,\ldots$, from $p_{k,0}$, using Equations \eqref{eigenvector0}--\eqref{eigenvectori}.
\item[Step 4)] The starting constant $p_{0,0}$ is uniquely determined by the normalisation equation. Equivalently, one may opt to set $p_{0,0}=1$ and normalise at the end the eigenvectors using the normalisation equation.
\end{description}

\subsection{Numerical evaluation of matrix $\bm{R}$}\label{sec-6.4}
It is known, see  \cite{MR1138205,CohenJAP}, that the sequences $\{\alpha_k\}_{k\geq0}$ and $\{\beta_k\}_{k\geq0}$ decrease exponentially fast to $0$.
Based on this fact, we suggest to truncate the dimension of the matrix $\bm{R}$, say at phase $N$,
and obtain its approximation, say $\bm{R}_N$, as
\[\bm{R}_N=\bm{P}_N^{-1}\bm{D}_N\bm{P}_N,\]
with $\bm{D}_N={\rm diag}(\alpha_0, \alpha_1,\ldots, \alpha_{N-1})$ and
the matrix $\bm{P}_N=(\bm{P}_0^{(N)},\ldots\bm{P}_{N-1}^{(N)})$, where $\bm{P}_k^{(N)}=(p_{k,0},\ldots, p_{k,N-1})$.
Then, as $N\rightarrow\infty$ the matrix $\bm{R}_N=\bm{P}_N\bm{D}_N\bm{P}_N^{-1}$ converges to the infinite matrix $\bm{R}$, cf. \cite{Shivakumar}.    Furthermore, closely inspecting the structure of the matrix $\bm{P}_N=(p_{k,m})_{0\leq k,m\leq N-1}$ we observe that it can be written as a generalised Vandermonde matrix for which the inverse can be easily calculated.\\

The idea of spectral truncation proposed in the above paragraph, due to the ``diagonisability'' of matrix $\bm{R}$, is equivalent to truncating the series expression of the equilibrium distribution by considering only the first $N$ product-form terms. It is known, see e.g. \cite{MR1138205}, that these series can be viewed as asymptotic expansions, thus including a higher number of terms of the series improves the approximation of the result.\\

There are several numerical procedure that one may implement in order to calculate the eigenvalues and eigenvectors of an infinite matrix, see, e.g. \cite{QR,QR2} and the references therein.

\section{Example: the join the shortest queue model}\label{sec-7}

We consider a queueing system with a Poisson arrival process with rate $2\rho$, two identical exponential servers, both with rate 1, and join the shortest queue (JSQ) routing, i.e. customers upon arrival join the queue with the smallest number of customers and in the case of a tie they choose either queue with probability 1/2. Such a queueing system can be modelled as a Markov process with states $(q_1,q_2) \in \mathbb{Z}_+^2$, where $q_i$ is the number of customers at queue $i$, including a customer possibly in service. By defining $m = \min(q_1,q_2)$ and $n = q_2 - q_1$, one transforms the state space from an inhomogeneous random walk in the quadrant to a homogeneous random walk in the half plane, where the two quadrants are symmetrical.
The transition rate diagram of the Markov process for $n,m\geq0$ is shown in Figure~\ref{fig:JSQ_various_models}.
Implementing Proposition \ref{stability_thm} one can easily prove the stability condition $\rho<1$.

\begin{figure}[h!]%
\centering%
\includegraphics{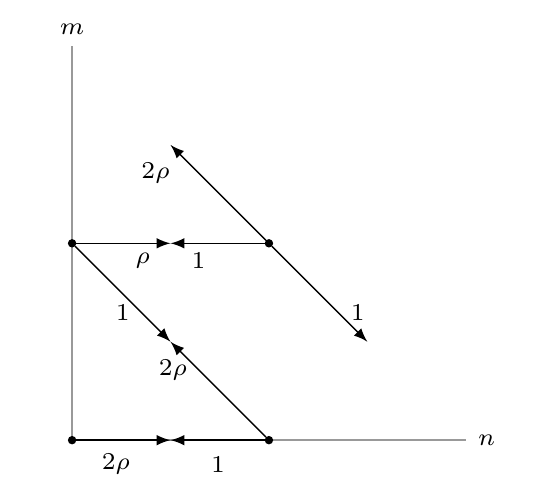}
\caption{Transition rate diagram on the state space $(n,m)$ for the symmetric JSQ system with two servers. Only the transitions at a few selected states are depicted as an indication.}%
\label{fig:JSQ_various_models}%
\end{figure}%

For the symmetric JSQ model one can easily show
\begin{align*}
K(x,y) &=x(2\rho x+1)-2(1+\rho)xy+y^2,\\
A(x,y)&=y^2-(1+2\rho)x y+ x,\\
B(x,y)&=x(1+2\rho)(x-y).
\end{align*}
Implementing  steps 1-6 described Section \ref{sec-4} permits us to reconstruct the
results in \cite[Chapter~3]{MR1138205} and \cite{Kingman} regarding the calculation of the equilibrium distribution, the results in \cite{Gertsbakh} regarding the numerical calculation for the matrix $\bm{R}$ and the results in \cite{Li} regarding the geometric decay rate given by the spectral radius of $\bm{R}$, see \cite{Taylor}.
In particular, solving $K(x,y)=0$ and $A(x,y)=0$ we obtain the single solution $x=\alpha_0^{-1}\in(1,\infty)$: $x=1/\rho^2$. Hence, we choose as a starting solution $\alpha_0=\rho^2\in(0,1)$. From the kernel equation $K(x,y)$ we can obtain for $x=1/\rho^2$ the starting $\beta$-solution: $\beta_0=\rho^2/(2+\rho)\in(0,1)$. Using recursively the kernel equation we can reconstruct the entire sequence $\{\alpha_k\}_{k\geq0}$ and $\{\beta_k\}_{k\geq0}$. The  calculation of the entire equilibrium distribution representation matches perfectly the results in
\cite[Chapter~3]{MR1138205}, where the JSQ model is treated using the compensation approach. \\
Note that solving $K(x,y)=0$ and $A(x,y)=0$ yields a unique $x$-solution outside the unit disk, or equivalently a unique  $\alpha$-solution in the unit disk. However solving $K(x,y)=0$ and $B(x,y)=0$ does not  yield any  $y$-solution outside the unit disk, or equivalently a unique  $\beta$-solution in the unit disk. In particular, the system
$K(x,y)=0$ and $B(x,y)=0$ yields the following solutions
$$\{x\to 0,y\to 0\},\{x\to 1,y\to 1\},\left\{x\to -\frac{1}{2 \rho },y\to 0\right\}$$
all of which in terms of $y$ lie in the interior or on the closure of the unit disk. This is related to the fact that the invariant measure cannot be written as
\[\pi_{n,m}=\beta^m f(n),\]
for some function of $n$. This is due to the ordering of the $\alpha$ and $\beta$ as $1<|\alpha_0|<|\beta_0|<|\alpha_1|<|\beta_1|<\cdots$.

For a numerical comparison between the results proposed in this paper and the derivation of the matrix $\bm{R}$ by increasing the state-space truncation the interested reader is refereed to \cite{Gertsbakh}.

\section{Conclusions and future work}\label{Conclusions}
In this paper, we generalise the QBD drift conditions for random walks in the lattice and illustrate the connection between the form of the equilibrium distribution depicted in Equation \eqref{1} and the derivation of the eigenvalues and eigenvectors of the infinite matrix $\bm{R}$. Moreover, this work sets the groundwork for the probabilistic interpretation of the terms $\alpha$ and $\beta$ appearing in the series of product-forms. \\

In this paper, we derive the stability condition of a nearest neighbour random walk using its underlying QBD structure. We do this by generalising the QBD drift conditions for random walks in the lattice \cite{Neuts} and  by illustrating the connection of these drift conditions to the Lyapunov drift conditions for random walks \cite{FIM}.\\

Using as a vehicle for illustration the sub-class of random walks in the quadrant with an invariant measure representable by a series of product-forms, we show how to obtain recursively the eigenvalues and the corresponding eigenvectors of the infinite matrix $\bm{R}$ appearing in the matrix geometric approach. Furthermore, we provide insights in the spectral properties of the infinite matrix $\bm{R}$ and discuss  in detail the properties of the resolvent operator of  $\bm{R}$. This work can be easily extended to cover a wider spectrum of random walks with meromorphic probability generating functions.\\

In this work, we set the foundations for the connection of three methodological approaches (the matrix geometric approach, the compensation approach, and  the boundary value problem method) and for gaining insight on the probabilistic interpretation of the terms appearing in the product-forms of the invariant measure.

\section*{Acknowledgments}
\noindent The work of S. Kapodistria is supported by an NWO Gravitation Project, NETWORKS, and a TKI Project, DAISY4OFFSHORE. The work of Z. Palmowski is partially supported by the National Science Centre under the grant
2015/17/B/ST1/01102.
The authors would like to thank I.J.B.F. Adan (Eindhoven University of Technology) and  O.J. Boxma (Eindhoven University of Technology) for their time and advice in the preparation of this work. Furthermore, the authors would like to thank  M. Miyazawa (Tokyo University of Science), P. Taylor (University of Melbourne), M. Telek (Technical University of Budapest), and G. Latouche (Universit\'e Libre de Bruxelles) for their valuable comments and the very interesting discussions. Finally, the authors would like to thank the anonymous reviewers for their careful reading of the paper and their insightful comments and suggestions.

\end{document}